\definecolor{atomictangerine}{rgb}{1.0, 0.6, 0.4}
\newtheorem{theorem}{Theorem}
\newtheorem{lemma}{Lemma}[section]
\newtheorem{corollary}{Corollary}
\newtheorem*{corollary*}{Corollary}
\newtheorem{proposition}[lemma]{Proposition}
\newtheorem*{conj}{Conjecture}
\theoremstyle{definition}
\newtheorem{notation}[lemma]{Notation}
\newtheorem{remark}[lemma]{Remark}
\newtheorem{definition}[lemma]{Definition}
\newcommand\QQ{{\mathbb Q}}
\newcommand\PP{{\mathbb P}}
\newcommand\p{{\mathbb P}}
\newcommand\Per{{\mathrm{Per}}}
\newcommand\Sc{{\mathbb S}}
\newcommand\tr{\hbox to 1mm  {${}^t \!  $} }
\newcommand{\nc}{\newcommand}
\DeclareMathOperator{\Tail}{Tail}
\DeclareMathOperator{\PrePer}{PrePer}
\nc{\apl}{K\cup \{\infty\}}
\nc{\mfp}{\mathfrak{p}}
\nc{\dmfp}{\delta_\mfp}
\nc{\vmfp}{v_\mfp}
\nc{\ace}{\`e }
\nc{\aca}{\`a }
\nc{\aci}{\`i }
\nc{\aco}{\`o }
\nc{\acu}{\`u }
\nc{\pid}{\mathfrak{p} }
\nc{\bdm}{\begin{displaymath}}
\nc{\edm}{\end{displaymath}}
\nc{\beq}{\begin{equation}}
\nc{\eeq}{\end{equation}}
\nc{\dpid}{\delta_{\mathfrak{p}}}
\nc{\vvs}{\textquotedblleft}
\nc{\vvd}{\textquotedblright}
\nc{\os}{\mathcal{O_S}}
\nc{\mcu}{\mathcal{U}}
\nc{\rs}{\sqrt{R_S}}
\nc{\rsu}{\sqrt{R_S^*}}
\nc{\bpm}{\begin{pmatrix}}
\nc{\epm}{\end{pmatrix}}
\nc{\srs}{\sqrt{R_S}}
\nc{\un}{\{1,2,\ldots,n\}}
\numberwithin{equation}{section}
\title[Scarcity of orbits]{Scarcity of finite orbits for rational functions over a number field}
\author{Jung Kyu Canci}
\address{Jung Kyu Canci, Universit\"{a}t Basel, Mathematisches Institut, Spiegelgasse $1$, CH-$4051$ Basel}
\email{jungkyu.canci@unibas.ch}
\author{Sebastian Troncoso}
\address{Sebastian Troncoso, Birmingham-Southern College, Box 549032,
900 Arkadelphia Road, Birmingham, Alabama 35254, USA}
\email{troncosomath@gmail.com}
\author{Solomon Vishkautsan}
\address{Solomon Vishkautsan, Universit\"{a}tsstra$\beta$e 30, 95447 Bayreuth, Germany}
\email{wishcow@gmail.com}
\begin{document}

\begin{abstract}
Let $\phi$ be a an endomorphism of degree $d\geq{2}$ of the projective line, defined over a number field $K$. Let $S$ be a finite set of places of $K$, including the archimedean places, such that $\phi$ has good reduction outside of $S$. The article presents two main results: the first result is a bound on the number of $K$-rational preperiodic points of $\phi$ in terms of the cardinality of the set $S$ and the degree $d$ of the endomorphism $\phi$. This bound is quadratic in terms of $d$ which is a significant improvement to all previous bounds on the number of preperiodic points in terms of the degree $d$. For the second result, if we assume that there is a $K$-rational periodic point of period at least two, then there exists a bound on the number of $K$-rational preperiodic points of $\phi$ that is linear in terms of the degree $d$.
\end{abstract}

\maketitle

\section{Introduction}  
In this article we prove the following theorem

\begin{theorem}\label{Thm:NFPrePer}
Let $K$ be a number field and $S$ a finite set of places of $K$ containing all the archimedean ones. 
Let $\phi$ be an endomorphism of $\PP^1$, defined over $K$, and $d\geq 2$ the degree of $\phi$. Assume $\phi$ has good reduction outside $S$. Then the number of $K$-rational preperiodic points is bounded by
$$Q(|S|,d)=\alpha_1 d^2+\beta_1 d+\gamma_1,$$
where $\alpha_1$, $\beta_1$ and $\gamma_1$ are positive integers depending only on the cardinality of $S$ and can be effectively computed. 

In addition, if we assume that $\phi$ has a $K$-rational periodic point of period at least two then
the number of $K$-rational preperiodic points is bounded by 
$$L(|S|,d)=\alpha_2 d+\beta_2,$$
where $\alpha_2$ and $\beta_2$ are positive integers depending only on the cardinality of $S$ and can be effectively computed. 
\end{theorem}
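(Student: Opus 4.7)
The plan is to split $\PrePer(\phi,K)$ into the periodic points $\Per(\phi,K)$ and the strictly preperiodic (``tail'') points, and to bound each piece separately using the two announced lemmas in concert with the theory of $S$-unit equations. Both Four-Point Lemma~A and Three-Point Lemma~A should convert the good-reduction hypothesis into an $S$-integrality condition on cross-ratios of tuples of orbit points, and the counts of such $S$-integral configurations are what ultimately produce the constants $\alpha_i,\beta_i,\gamma_1$.

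For the periodic points, I would take any four distinct $K$-rational periodic points, normalize three of them to $0,1,\infty$, and observe that good reduction outside $S$ forces the remaining coordinate and its $\phi$-iterates to be $S$-integral. Feeding quadruples of periodic points into the Four-Point Lemma~A should give, first, a bound on the length of any single cycle and then on the total number of cycles in terms of $|S|$ alone. Since a single cycle can contain at most $O(d)$ $K$-rational members, this yields
$$|\Per(\phi,K)| \le c_1(|S|)\,d.$$
For the tail points, for each periodic $q$ I would consider
$$T_q = \{P \in \PrePer(\phi,K)\setminus\Per(\phi,K) : \phi^{n(P)}(P) = q\},$$
where $n(P)\ge 1$ is the minimal level. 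At each fixed level there are at most $d$ $K$-rational preimages since $\deg\phi=d$. The Three-Point Lemma~A, applied to consecutive triples along a chain $P_0\to P_1\to\cdots\to q$, should bound the length of such a chain by a constant $L(|S|)$ depending only on $|S|$, because a longer chain would produce more $S$-integral cross-ratios than the Evertse-type count permits. Hence $|T_q|\le L(|S|)\cdot d$, and summing over $q\in\Per(\phi,K)$ together with the previous step gives $|\PrePer(\phi,K)|\le \alpha_1 d^2+\beta_1 d+\gamma_1$ as claimed.

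For the linear refinement, the extra hypothesis supplies a $K$-rational pair $p,\phi(p)$ of distinct periodic points inside a single cycle. Sending $p\mapsto 0$ and $\phi(p)\mapsto \infty$ provides a two-point normalization rigid enough that the Four-Point Lemma~A applied to $\{0,\infty\}$ together with two further orbit points should force both the number of cycles and their lengths to depend only on $|S|$, collapsing the periodic-point count to a constant $c_2(|S|)$. The per-periodic-point tail estimate $L(|S|)\cdot d$ is unaffected, and the total then becomes $L(|S|,d)=\alpha_2 d+\beta_2$.

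The main obstacle in this program is the degree-independence of the tail-length bound $L(|S|)$: one must verify that the $S$-unit-equation count triggered by the Three-Point Lemma really depends only on $|S|$, not on $d$, since otherwise the total estimate would balloon beyond quadratic. A secondary but genuine difficulty is that, under the period-$\ge 2$ hypothesis, one must show that the two-point normalization truly removes the $d$-dependence from the cycle counting of Step~1, rather than merely improving a multiplicative constant; this is what separates the sharper linear bound from the quadratic one.
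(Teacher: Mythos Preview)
Your tail-point argument has a genuine gap. Bounding the \emph{length} of a chain $P_0\to P_1\to\cdots\to q$ by a constant $L(|S|)$ does not yield $|T_q|\le L(|S|)\cdot d$: the preimage tree over $q$ of depth $L$ has up to $d+d^2+\cdots+d^{L}$ vertices, so a chain-length bound alone only gives $|T_q|\le d^{L(|S|)+1}$, which is not polynomial in $d$. The paper's mechanism is different and rests on a case split. If $|\Per(\phi,K)|\ge 4$, Proposition~\ref{troncoso} forces $\dmfp(R,P_i)=0$ for any tail point $R$ and at least three of the four periodic points, so Three-Point Lemma~A bounds the \emph{entire} set $\Tail(\phi,K)$ by a constant independent of $d$ (Theorem~\ref{Thm:RefineTroncoso}(2)). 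If $|\Per(\phi,K)|\le 3$, one sums at most three terms $|\Tail(\phi,K,P)|$. For a fixed point $P$ the key input is Lemma~\ref{pab}: choosing non-periodic $P_1,P_2$ with $\phi(P_2)=P_1$, $\phi(P_1)=P$, \emph{every} $Q\in\Tail(\phi,K,P_2)$ satisfies $\dmfp(Q,P)=\dmfp(Q,P_1)=\dmfp(Q,P_2)$ for all $\mfp\notin S$. These distances are equal but generally nonzero, so Three-Point Lemma~A is inapplicable; one needs the new Three-Point Lemma~C, which bounds such $Q$ by $B(|S|)$. The $(d-1)\cdot d$ choices for $(P_1,P_2)$ then produce the quadratic term in $Q(|S|,d)$.

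Your mechanism for the linear refinement is also misidentified. The period-$\ge 2$ hypothesis does not collapse $|\Per(\phi,K)|$ to a constant; that count remains linear in $d$ regardless. What improves is the per-periodic-point tail bound in the $|\Per(\phi,K)|\le 3$ branch: when $P$ has period $2$ or $3$, or when a period-$2$ cycle coexists with a fixed point $P$, one passes to $\phi^2$ or $\phi^3$ and combines Proposition~\ref{troncoso} with Lemma~\ref{pab} to obtain enough distance relations (handled via the new Three-Point Lemma~C and Four-Point Lemma~B) that already after \emph{one} level of preimages the remaining tail is bounded by a constant depending only on $|S|$. This makes each $|\Tail(\phi,K,P)|$ linear in $d$ (Lemmas~\ref{lem:TailTwo}--\ref{lem:TailFixedAndTwo}); the quadratic $L_1(d,|S|)$ of Lemma~\ref{lem:TailFix} arises only when all $K$-rational periodic points are fixed.
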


We emphasize that the constants $\alpha_1,\alpha_2,\beta_1, \beta_2$ and $\gamma_1$ in the theorem depend only on the cardinality of $S$ (and thus implicitly on the degree $[K:\mathbb{Q}]$) but not on the field $K$ itself. An explicit definition of the bounds $Q(|S|,d)$ and $L(|S|,d)$ will be given in at the end Section \ref{Appendix}. 

Let $\phi:\mathbb{P}_N\to\mathbb{P}_N$ be a endomorphism of degree $d\geq{2}$ defined over a number field $K$. Let $\phi^n$ denote the $n^{th}$ iterate of $\phi$ under composition and $\phi^0$ the identity map. The \emph{orbit} of  $P\in \PP_N(K)$ under $\phi$ is the set  $O_\phi(P)=\{\phi^n(P )  :  n \geq 0 \}$. A point $P \in \PP_N(K)$ is called \emph{periodic} under $\phi$ if there is an integer $n > 0$ such that $\phi^n(P)=P$; the minimal such $n$ is called the \emph{period} of $P$. It is called \emph{preperiodic} under $\phi$ if there is an integer $m \geq 0$ such that $\phi^m(P)$ is periodic. A point that is preperiodic but not periodic is called a \emph{tail} point. Let $\Tail(\phi,K)$, $\Per(\phi,K)$ and $\PrePer(\phi,K)$ denote the sets of $K$-rational tail, periodic and preperiodic points of $\phi$, respectively.

The set of preperiodic points in $\PP_N(\bar{K})$ of an endomorphism  $\phi : \PP_N \rightarrow \PP_N$ of degree $d\geq 2$ defined over a number field $K$, where $\bar{K}$ is the algebraic closure of $K$, is of bounded height (this is a special case of Northcott's theorem \cite{N1950}). Since a number field $K$ possesses the Northcott property (\emph{i.e.}, that every set of bounded height is finite, \cite{BG2006}), the set of $K$-rational preperiodic points of $\phi$ is finite. In fact, from Northcott's proof, an explicit bound on $\PrePer(\phi,K)$ can be found in terms of the coefficients of $\phi$. The problem is to find a bound on the number of preperiodic points that depends in a ``minimal'' way on the map $\phi$. One of the main motivations for our research is the well known Morton and Silverman \cite{MS1994} conjecture which we state below

\begin{conj} [Uniform Boundedness Conjecture] 
Let $\phi:\mathbb{P}_N\to\mathbb{P}_N$ be an endomorphism of degree $d\geq{2}$ defined over a number field $K$. Let $D$ be the degree of $K$ over $\QQ$. Then there exists a number $C=C(D,N,d)$ such that $\phi$ has at most $C$ preperiodic points in $\PP^N(K)$.
\end{conj}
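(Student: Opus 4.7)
The plan is to reduce the conjecture to the good-reduction setting addressed by Theorem \ref{Thm:NFPrePer}. Since the cardinality of $\PrePer(\phi, K)$ is invariant under $\PGL_{N+1}(\bar K)$-conjugation, the first step is to choose a representative in the conjugacy class of $\phi$ whose set $S$ of places of bad reduction is controlled by the intrinsic invariants $D$, $N$, $d$. If this could be done, with $|S| \leq f(D,N,d)$ for some explicit function $f$, then Theorem \ref{Thm:NFPrePer} (for $N=1$) or a conjectural higher-dimensional analog would deliver the uniform bound $C(D,N,d)$.

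Concretely, I would work on the moduli space $M_d^N$ of degree-$d$ endomorphisms of $\PP^N$ modulo conjugation, and exploit the fact that $\phi$ corresponds to a $K$-rational point $[\phi] \in M_d^N(K)$. The task is then to produce, from $[\phi]$, a $K$-rational representative whose coefficients can be integrally normalized outside a set of primes whose size is bounded by $D$, $N$, $d$ alone, and finally to invoke a uniform good-reduction bound (generalizing Theorem \ref{Thm:NFPrePer} to arbitrary $N$).

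The fundamental obstacle --- and the reason the Morton--Silverman conjecture has stood open --- is that both ingredients are themselves open problems. First, no unconditional mechanism is known for producing a conjugate of $\phi$ whose bad-reduction set is bounded purely in terms of $D$, $N$, $d$; such a statement would amount to a Shafarevich-type finiteness result for the moduli space $M_d^N$ with arithmetic complexity controlled by $D$. Moreover, the quantitative bound in Theorem \ref{Thm:NFPrePer} depends on $|S|$ through the coefficients $\alpha_i, \beta_i, \gamma_1$, so even a sharp good-reduction bound must be matched with independent control on $|S|$. Second, in dimension $N > 1$ no analog of Theorem \ref{Thm:NFPrePer} is known at all: the combinatorial inputs behind the Four- and Three-Point Lemmas (placing preperiodic points in distinguished position modulo a prime of good reduction) are intrinsic to the one-dimensional projective line, and no robust substitute is available on $\PP^N$.

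In short, while the structural strategy --- reduce to good reduction, then apply a uniform bound --- is clear, both ingredients lie beyond current techniques in arithmetic dynamics. Partial results (Benedetto's theorem for polynomial maps on $\PP^1$, and quantitative bounds in the style of the present paper) provide strong evidence for, but fall well short of, a proof of the full conjecture.
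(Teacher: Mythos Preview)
The statement you were asked to prove is the Uniform Boundedness Conjecture of Morton and Silverman, and the paper does not prove it: it is explicitly stated as a \emph{conjecture} and used only as motivation for the weaker results in Theorem~\ref{Thm:NFPrePer}. There is therefore no ``paper's own proof'' to compare against.

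Your write-up is not a proof but a (correct) diagnosis of why no proof is available. You outline a plausible two-step strategy---conjugate to control the bad-reduction set, then apply a good-reduction bound---and then, accurately, explain that both steps are open. This is a reasonable piece of mathematical commentary, but it should not be labeled a proof proposal: you yourself conclude that the argument cannot be completed with current methods. If the assignment was to attempt a proof, the honest answer is simply that the statement is a famous open conjecture; the paper's contribution is the conditional bound of Theorem~\ref{Thm:NFPrePer}, which depends on $|S|$ and hence does not yield the uniform bound $C(D,N,d)$.

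One minor correction to your discussion: you say that $|\PrePer(\phi,K)|$ is invariant under $\PGL_{N+1}(\bar K)$-conjugation. Conjugation by an element not defined over $K$ can change the set of $K$-rational preperiodic points (it moves them to points rational over a larger field), so only $\PGL_{N+1}(K)$-conjugation preserves this cardinality. This further constrains the first step of your strategy, since you would need a $K$-rational conjugate with controlled bad reduction.
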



This conjecture is an extremely strong uniformity conjecture. For example, the UBC (Uniform Boundedness Conjecture) for maps of degree 4 on $\PP_1$ defined over a number field $K$ implies Merel's theorem (see \cite{M1996}), i.e. that the size of the torsion subgroup of an elliptic curve over a number field $K$ is bounded only in terms of $[K:\mathbb{Q}]$. The conjecture can also be similarly applied to uniform boundedness of torsion subgroups of abelian varieties; for more details see \cite{F2001}.	

The Uniform Boundedness Conjecture seems extremely difficult to prove even in the simplest case when $(K,N,d)=(\mathbb{Q},1,2)$. Furthermore, in this special case, explicit conjectures have been formulated. For instance, B.\ Poonen \cite{P1998} conjectured an explicit bound when $\phi$ is a quadratic polynomial map over $\mathbb{Q}$. Since every such quadratic polynomial map is conjugate to a polynomial of the form $\phi_c(z)=z^2+c$ with $c\in \mathbb{Q}$ we can state Poonen's conjecture as follows: Let $\phi_c \in \mathbb{Q}[z]$ be a polynomial of degree 2 of the form $\phi_c(z)=z^2+c$ with $c\in \mathbb{Q}$. Then $ | \PrePer(\phi_c,\mathbb{Q}) | \leq 9$. B. Hutz and P. Ingram \cite{HI2013} have shown that Poonen's conjecture holds when the numerator and denominator of $c$  don't exceed  $10^8$.

A natural relaxation of the uniform boundedness conjecture is to restrict our study to families of rational functions given in terms of good reduction. A rational map $\phi:\mathbb{P}_1\to\mathbb{P}_1$ of degree $d\geq{2}$ defined over a number field $K$ is said to have \emph{good reduction} at a non zero prime $\mfp$ of $K$ if $\phi$ can be written as $\phi=[F(X,Y):G(X,Y)]$ where $F,G\in R_\mfp[X,Y]$ are homogeneous polynomials of degree $d$, such that the resultant of $F$ and $G$ is a $\mfp$-unit, where $R_\mfp$ is the localization of the ring of integers of $K$ at $\mfp$. The map $\phi$ is said to have \emph{bad reduction} at a prime $\mfp$ of $K$ if $\phi$ does not have good reduction at $\mfp$. For a fixed finite set $S$ of places of $K$ containing all the archimedean ones, we say that $\phi$ has good reduction outside of $S$ if it has good reduction at each place $\mfp \notin S$.

In the special case of rational functions $\phi:\mathbb{P}_1\to\mathbb{P}_1$, there are several results giving a uniform bound on the number of periodic/preperiodic points of $\phi$ depending on the cardinality of a finite set of places $S$, which includes all archimedean places, together with the constants $[K:\QQ]$ and $\deg(\phi)$, under the assumption that $\phi$ has good reduction outside of $S$ (e.g., \cites{N1989,MS1994,B2007,C2007,C2010,CP2014,CV2016,T2017 }).

We recall the definition of the $\mfp$-adic logarithmic distance on $\p_1(K)$ for a finite place $\pid$ of a number field $K$:
Let $P_1=[x_1:y_1]$ and $P_2=[x_2:y_2]$ be points in $\p_1(K)$. We will denote by 
$$ \dmfp(P_1,P_2)=v_\pid(x_1y_2-x_2y_1)-\min\{v_{\pid}(x_1),v_{\pid}(y_1) \} -\min\{v_{\pid}(x_2),v_{\pid}(y_2) \} $$
the \emph{$\mfp$-adic logarithmic distance} between the points $P_1$ and $P_2$. 

In \cite{CV2016} the first and third author proved a bound on the number of periodic points of a rational function $\phi$ that is \emph{linear} in the degree of $\phi$, but exponential in $|S|$. Roughly, the number of periodic points is bounded by $2^{2^5|S|}d+2^{2^{77}|S|}$. To prove this result, the authors used the following lemma.


\begin{lemma}[Four-Point Lemma A \cite{CV2016}]\label{lem:4P}   
Let $\phi$ be an endomorphism of $\PP_1$ of degree $d\geq{2}$, defined over a number field $K$. Let $S$ be a finite set of places of $K$ containing all the archimedean ones, such that $\phi$ has good reduction outside $S$. Let $A,B,C,D\in\p_1(K)$ be four distinct points such that also the images $\phi(A),\phi(B),\phi(C),\phi(D)$
are distinct. Let $\mathcal{P}$ be the set of points
$P\in \p_1(K)$ satisfying the following four equations for all $\pid\notin{S}$.
\begin{align}\label{main-lemma-eq}
\begin{split}
\dpid(A,P)&=\dpid(\phi(A),\phi(P)),\quad \dpid(B,P)=\dpid(\phi(B),\phi(P)), \\
\dpid(C,P)&=\dpid(\phi(C),\phi(P)),\quad \dpid(D,P)=\dpid(\phi(D),\phi(P)).
\end{split}
\end{align} 
Then $\mathcal{P}$ is finite and 
\beq|\mathcal{P}|\leq 2^{2^5|S|}d+2^{2^{77}|S|}.\eeq
\end{lemma}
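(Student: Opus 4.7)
The plan is to translate the four distance-preservation conditions into an $S$-unit equation problem by normalizing coordinates, and then to apply Beukers--Schlickewei-type bounds on the number of solutions.

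First I would apply automorphisms $\tau_1, \tau_2 \in \PGL_2(K)$ sending $A, B, C$ and $\phi(A), \phi(B), \phi(C)$ respectively to $0, 1, \infty$, and consider the conjugate $\psi = \tau_2 \circ \phi \circ \tau_1^{-1}$, which fixes $0, 1, \infty$ and has good reduction outside an enlarged set $S' \supseteq S$ with $|S'|$ bounded linearly in $|S|$. Writing $x = \tau_1(P)$ and $y = \psi(x) = \tau_2(\phi(P))$, the three distance equalities at $A, B, C$ translate, for all $\mfp \notin S'$, into
\[ v_\mfp(y) = v_\mfp(x), \qquad v_\mfp(1-y) = v_\mfp(1-x), \qquad v_\mfp(1/y) = v_\mfp(1/x), \]
which forces $u := y/x$ and $v := (1-y)/(1-x)$ to be $S'$-units.

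Incorporating the fourth condition, let $\delta = \tau_1(D)$ and $\varepsilon = \tau_2(\phi(D))$; the equation $\delta_\mfp(D,P) = \delta_\mfp(\phi(D), \phi(P))$ for $\mfp \notin S'$ forces $w := (y - \varepsilon)/(x - \delta)$, after a fixed rescaling by a constant in $K^*$, also to be an $S'$-unit. Eliminating $x$ and $y$ from $y = ux$, $1 - y = v(1-x)$ and $y - \varepsilon = w(x - \delta)$ produces a single polynomial relation $F(u,v,w) = 0$ with coefficients in $K^*$ depending only on $\delta, \varepsilon$. Dividing through by a monomial in $u, v, w$ yields a two-variable $S'$-unit equation, to which the Beukers--Schlickewei theorem assigns at most $2^{16(|S'|+1)}$ solutions. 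Since $|S'|$ is linear in $|S|$, this gives at most $2^{2^5|S|}$ admissible triples $(u, v, w)$. For each triple, the value of $x$ is determined as an explicit rational function of $(u, v)$, and then $\psi(x) = ux$ is a polynomial equation of degree $d$ in $x$, yielding at most $d$ solutions. This produces the main term $2^{2^5|S|} d$.

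The additive term $2^{2^{77}|S|}$ absorbs the degenerate sub-cases: when $u = v$ (so the parametrization collapses and $x$ is a fixed point of $\psi$), when $\delta$ or $\varepsilon$ lies in $\{0, 1, \infty\}$ (so one of the ratios is undefined and must be replaced), or when the coefficients of the linear relation among $u, v, w$ vanish. In each such configuration one must iterate the $S$-unit machinery: the collapsed relation produces a secondary unit equation whose solutions are themselves counted by Beukers--Schlickewei, and nesting the two bounds yields the triple-exponential constant. The main obstacle is precisely this degenerate-case analysis, together with the delicate bookkeeping that tracks $|S'|$ through each normalization step and verifies that the degenerate contributions carry no additional factor of $d$; controlling these simultaneously is what fixes the exponents $2^5$ and $2^{77}$ in the final bound.
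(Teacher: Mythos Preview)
This lemma is not proved in the present paper at all: it is quoted verbatim from \cite{CV2016} and used as a black box, so there is no ``paper's own proof'' here to compare against. What you have written is therefore a proposed independent proof, and it should be assessed on its own merits.

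The normalization step is sound: after conjugating so that $A,B,C$ and $\phi(A),\phi(B),\phi(C)$ go to $0,1,\infty$ and enlarging $S$ to absorb the bad primes of the two automorphisms, the three distance equalities at $A,B,C$ do force $u=y/x$ and $v=(1-y)/(1-x)$ to be $S'$-units, and $x,y$ are then rational functions of $(u,v)$ via $x=(v-1)/(v-u)$, $y=u(v-1)/(v-u)$.

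There are, however, two genuine gaps. First, your claim that eliminating $x,y$ from the three relations $y=ux$, $1-y=v(1-x)$, $y-\varepsilon=w(x-\delta)$ produces, after dividing by a monomial, a \emph{two}-variable $S'$-unit equation is not correct: the resulting identity has six monomials (of the shape $uv,u,v,uw,vw,w$), so at best you land in a five-term unit equation. Beukers--Schlickewei does not apply; you would need Evertse--Schlickewei--Schmidt, and the constant you get is then of the form $e^{(6n)^{3n}\cdot(\ldots)}$ rather than $2^{16(|S'|+1)}$. This already makes the exponent $2^{5}|S|$ in the main term inaccessible by your route.

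Second, the appearance of the factor $d$ is muddled. You write that $x$ is \emph{determined} by $(u,v)$, and then immediately say that $\psi(x)=ux$ is a degree-$d$ equation ``yielding at most $d$ solutions''. But if $x$ is already determined by $(u,v)$, the relation $\psi(x)=ux$ is either satisfied or not; there is no extra multiplicity $d$ to harvest. In the argument of \cite{CV2016} the factor $d$ enters differently: one first bounds an auxiliary quantity (essentially a value of a cross-ratio, living in a set of $S$-unit type of size $\le 2^{2^{5}|S|}$), and then observes that for each such value there are at most $d$ preimages under $\phi$; the enormous additive term $2^{2^{77}|S|}$ comes from a nested application of the unit-equation machinery in the degenerate configurations. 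Your sketch does not reproduce this structure, and as written it does not yield the stated bound.
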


One obtains a bound on the number of periodic points by proving that the set of periodic points is contained in the set $\mathcal{P}$ in Four-Point Lemma A. The lemma is directly related to the Siegel-Mahler theorem (cf.\ \cite{FZ2014}) and Evertse's explicit bound on the number of solutions of the $S$-unit equation (see \cite{E1984}), that can be combined and presented in the following way.

\begin{lemma}[Three-Point Lemma A]\label{Sint3p}
Let $K$ be a number field and $S$ a finite set of places of $K$ containing all the archimedean ones. Let $A,B,
C\in\p_1(K)$ be three distinct points. Let $\mathcal{P}$ be the set of points
$P\in \p_1(K)$ satisfying the following three equations for all $\pid\notin{S}$.
\begin{equation}
\dpid(A,P) = 0, \qquad \dpid(B,P) = 0, \qquad \dpid(C,P) = 0
\end{equation}
Then $\mathcal{P}$ is finite and 
\begin{equation}
  |\mathcal{P}|\leq 3\cdot 7^{4|S|}.
\end{equation}
\end{lemma}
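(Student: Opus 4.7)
The plan is to reduce the statement to Evertse's explicit bound on the $S$-unit equation $x+y=1$, via the classical Plücker identity for four points in $\mathbb{P}^1$:
\[
[A,B][C,P] - [A,C][B,P] + [A,P][B,C] = 0,
\]
where $[X,Y] := x_1y_2 - x_2y_1$ denotes the $2\times 2$ determinant for $X=[x_1:y_1]$ and $Y=[x_2:y_2]$; note this is precisely the quantity whose valuation appears in the definition of $\delta_\mathfrak{p}$.

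First, I would fix projective coordinate representatives for $A,B,C,P$ whose components have coprime defining ideals outside a finite enlargement $S'\supseteq S$, so that $\delta_\mathfrak{p}(X,Y) = v_\mathfrak{p}([X,Y])$ for every $\mathfrak{p}\notin S'$; the enlargement is needed to navigate the ideal class group of $K$ and principalize the coordinate ideals. Under this normalization, the three hypotheses of the lemma translate precisely into the assertion that $[A,P]$, $[B,P]$, $[C,P]$ are units in $\mathcal{O}_{S'}$. A further finite enlargement $S''$ absorbs the primes where the fixed brackets $[A,B]$, $[A,C]$, $[B,C]$ fail to be units.

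Dividing the Plücker identity by $[A,C][B,P]$ yields $u+v = 1$, where
\[
u := \frac{[A,P][B,C]}{[A,C][B,P]}, \qquad v := \frac{[A,B][C,P]}{[A,C][B,P]}.
\]
By construction $u$ and $v$ lie in fixed cosets of $\mathcal{O}_{S''}^*$ whose representatives $[B,C]/[A,C]$ and $[A,B]/[A,C]$ depend only on $A,B,C$; writing $u=\alpha u'$ and $v=\beta v'$ reduces the equation to the generalized unit equation $\alpha u' + \beta v' = 1$ with $u',v' \in \mathcal{O}_{S''}^*$. Evertse's theorem bounds the number of its solutions by $3\cdot 7^{|S''|}$. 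The map $P\mapsto u$ is injective: once $u$ is known, the ratio $[A,P]/[B,P]$ is determined, and together with the two fixed points $A,B$ this recovers $P$ via a linear relation. Hence $|\mathcal{P}| \leq 3\cdot 7^{|S''|}$.

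The main obstacle is controlling $|S''|$. The added primes account both for making coordinate ideals principal outside $S$ and for the prime divisors of the three brackets $[A,B]$, $[A,C]$, $[B,C]$, which a priori can be numerous. Through a careful choice of coordinate normalization, exploiting the $\mathrm{PGL}_2(K)$-action on triples of points to minimize bad primes, one should achieve $|S''| \leq 4|S|$, yielding the stated bound $|\mathcal{P}| \leq 3\cdot 7^{4|S|}$.
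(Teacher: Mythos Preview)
The paper does not supply its own proof of Three-Point Lemma~A; it presents the statement as a direct repackaging of the Siegel--Mahler theorem on $S$-integral points of $\mathbb{P}^1\setminus\{A,B,C\}$ together with Evertse's explicit bound from~\cite{E1984}. Your overall strategy---Pl\"ucker identity, reduction to a two-variable unit equation with coefficients, then Evertse---is the right one, and the injectivity of $P\mapsto u$ is correct.

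There is, however, a genuine gap in how you reach the exponent $4|S|$. Two issues compound. First, Evertse's 1984 bound for $\alpha u'+\beta v'=1$ with $u',v'\in\mathcal{O}_{S}^*$ is $3\cdot 7^{[K:\mathbb{Q}]+2|S|}$, not $3\cdot 7^{|S|}$. Second, your enlargement to $S''$ is both unnecessary and uncontrolled: the primes dividing $[A,B],[A,C],[B,C]$ depend on $A,B,C$ and can be arbitrarily many, and a $\mathrm{PGL}_2(K)$-move sending $(A,B,C)$ to $(0,1,\infty)$ does \emph{not} preserve the conditions $\delta_{\mathfrak p}(\,\cdot\,,P)=0$ at primes where that automorphism fails to lie in $\mathrm{PGL}_2(R_{\mathfrak p})$, so you cannot ``minimize bad primes'' this way. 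Even if one granted $|S''|\le 4|S|$, the correct Evertse exponent would yield $3\cdot 7^{[K:\mathbb{Q}]+8|S|}$, not the stated bound.

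The fix is to avoid enlarging $S$ altogether. Evertse's theorem already allows arbitrary nonzero coefficients, so the fixed brackets $[A,B],[A,C],[B,C]$ may stay as constants. For the class-group obstruction, work with the coordinate-free ratios $[B,P]/[A,P]$ and $[C,P]/[A,P]$: under the hypotheses $\delta_{\mathfrak p}(A,P)=\delta_{\mathfrak p}(B,P)=\delta_{\mathfrak p}(C,P)=0$, each ratio has $\mathfrak p$-valuation independent of $P$ for every $\mathfrak p\notin S$, hence lies in a single coset of $\mathcal{O}_S^*$. The Pl\"ucker relation then becomes $\alpha u+\beta v=1$ with $u,v\in\mathcal{O}_S^*$ and fixed $\alpha,\beta\in K^*$, giving at most $3\cdot 7^{[K:\mathbb{Q}]+2|S|}$ values of $P$. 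The exponent $4|S|$ now falls out of the elementary inequality $[K:\mathbb{Q}]=r_1+2r_2\le 2(r_1+r_2)\le 2|S|$, which holds precisely because $S$ contains all archimedean places.
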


The second author proved in \cite{T2017} an inverse relationship between periodic points and tail points. Using Three-Point Lemma A, and combining with results from \cite{CV2016}, we can improve the result in \cite{T2017}. Before stating the result we recall some definitions: A point $[a:b]\in\p_1(K)$ is \emph{critical} for a rational function $\phi\colon \p_1\to\p_1$ if the order of zero at $[a:b]$ of the algebraic condition $\phi([x:y])=\phi([a:b])$ is greater than $1$ (if the point $P$ and its image $\phi(P)$ are finite, then $P$ is critical if and only $\phi'(P)=0$). A point \emph{belongs to a critical cycle} of $\phi$ if it belongs to the orbit of a critical periodic point. We denote by $\textrm{Per}_0(\phi,K)$ the set of $K$-rational (periodic) points that belong to some critical cycle of $\phi$.


\begin{theorem}\label{Thm:RefineTroncoso}
  Let $\phi : \PP_1\to\PP_1$ be a rational map of degree $d\geq{2}$ defined over a number field $K$. 
Suppose $\phi$ has good reduction outside a finite set of places $S$, including all archimedean ones.
\begin{enumerate}
\item If there exist three $K$-rational points such that each point is either a tail point or belongs to a critical cycle of $\phi$ then \[|\Per(\phi,K)| \leq 3\cdot 7^{4|S|} + 3.\]
\item If there exist at least four $K$-rational periodic points of $\phi$ then 
\[|\Tail(\phi,K)| + |\Per_0(\phi,K)| \leq 12\cdot 7^{4|S|}.\]
\end{enumerate}
 
\end{theorem}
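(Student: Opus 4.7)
The plan is to reduce both parts of the theorem to the Three-Point Lemma A by exhibiting, for each point $Q$ we wish to count, three ``anchor'' points $A,B,C$ such that $\dpid(A,Q)=\dpid(B,Q)=\dpid(C,Q)=0$ for every $\mfp\notin S$. The bridge between the hypotheses and this setup is the following distance-vanishing statement, which is what the phrase ``combining with results from \cite{CV2016}'' is alluding to: \emph{if $Q$ is either a $K$-rational tail point of $\phi$ or a $K$-rational point lying in a critical cycle of $\phi$, and if $P\neq Q$ is any $K$-rational periodic point of $\phi$, then $\dpid(Q,P)=0$ for every $\mfp\notin S$.}

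The tail case follows from a Morton--Silverman-type argument: good reduction at $\mfp$ forces any $K$-rational point sharing its $\mfp$-reduction with a periodic point to itself be periodic, hence a tail point cannot have the same reduction as a periodic point and $\dpid(Q,P)=0$. The critical-cycle case uses the sharper ramification estimate at a critical periodic point: if $Q$ lies in a critical cycle of length $n$, then the ramification index of $\phi^n$ at $Q$ is at least $2$, since one of $Q,\phi(Q),\ldots,\phi^{n-1}(Q)$ is critical. Setting $N=\lcm(n,m)$ with $m$ the period of $P$, the ramified form of the good-reduction estimate yields
\[ \dpid(Q,P)=\dpid(\phi^N(Q),\phi^N(P))\geq 2^{N/n}\,\dpid(Q,P), \]
which forces $\dpid(Q,P)=0$.

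Granted this, part (1) is immediate: with $T_1,T_2,T_3$ the three hypothesized anchors, every $P\in\Per(\phi,K)\setminus\{T_1,T_2,T_3\}$ satisfies $\dpid(T_i,P)=0$ for $i=1,2,3$ and all $\mfp\notin S$, so Three-Point Lemma A bounds this set by $3\cdot 7^{4|S|}$; adding the at most three $T_i$ that are themselves periodic (namely those lying in critical cycles) yields $|\Per(\phi,K)|\leq 3\cdot 7^{4|S|}+3$. For part (2), given four distinct periodic points $P_1,\ldots,P_4$, each $Q\in\Tail(\phi,K)\cup\Per_0(\phi,K)$ equals at most one of the $P_i$, so there is at least one of the $\binom{4}{3}=4$ triples from $\{P_1,\ldots,P_4\}$ that avoids $Q$; for that triple, the distance-vanishing statement provides the three needed vanishing distances. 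Taking the union over all four triples gives $|\Tail(\phi,K)\cup\Per_0(\phi,K)|\leq 4\cdot 3\cdot 7^{4|S|}=12\cdot 7^{4|S|}$.

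The main obstacle is to establish the distance-vanishing statement cleanly, especially the critical-cycle part, which requires careful bookkeeping of how the ramification index along the cycle interacts with the non-archimedean good-reduction estimates. Once this auxiliary fact is in hand, the rest of the proof is a straightforward counting application of Three-Point Lemma A, with the two parts differing only in which points are chosen as anchors.
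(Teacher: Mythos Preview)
Your overall strategy---anchor each point by three others at $\mfp$-adic logarithmic distance zero and invoke Three-Point Lemma~A---is exactly the paper's, and your treatment of the critical-cycle case matches Proposition~\ref{cv-crit}. The gap is in your distance-vanishing statement for tail points: the claim that under good reduction any point sharing its $\mfp$-reduction with a periodic point must itself be periodic is false. For $\phi(z)=z^2$ over~$\QQ$ (good reduction everywhere) the tail point $-1$ and the fixed point $1$ coincide modulo~$2$, so $\delta_2(-1,1)=1>0$. The correct statement is Proposition~\ref{troncoso}: if $R$ is a tail point whose orbit enters a cycle of length $n$, then $\dpid(R,P)=0$ for every periodic $P$ \emph{except} possibly the single periodic point $P^*=\phi^{mn}(R)$ lying in the orbit of $R$.

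This exception is precisely why four periodic anchors are needed in Part~(2). When $Q$ is a tail point, the point to discard among $P_1,\dots,P_4$ is not $Q$ itself (which, being non-periodic, is never one of the $P_i$) but the exceptional periodic point $P^*$; the paper makes exactly this exclusion. Since $P^*$ is unique, at least three of the four $P_i$ satisfy $\dpid(Q,P_i)=0$ for all $\mfp\notin S$, and your union-over-triples count then goes through unchanged. Your Part~(1) argument likewise needs the corrected input: with the true Proposition~\ref{troncoso} in hand, one argues as the paper does rather than via the stronger (and false) blanket vanishing.
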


It is important to remark that the bounds in the theorem are \emph{independent} of the degree $d$ of the map $\phi$. Furthermore, the hypothesis of the second statement in the result is optimal in the sense that it is impossible to get any result of this form (i.e. independent of $d$) when $|\Per(\phi,K)|<4$, as shown by the examples in \cite[\S{5}]{T2017}.

The main goal of this article is to combine the results from \cite{CV2016} and \cite{T2017} as well as techniques from Canci and Paladino \cite{CP2014}, to obtain a bound on the number of preperiodic points which is better than previous results in terms of the degree $d$ of $\phi$. In fact if the rational map has a $K$-rational periodic point of period at least 2 then we prove a bound on the number of preperiodic points of a rational function that is \emph{linear} in the degree of the rational function, but exponential in $|S|$. In any other case, we provide a bound on the number of preperiodic points of a rational function that is \emph{quadratic} in the degree of the rational function, but exponential in $|S|$.

Note that Benedetto's bound \cite{B2007} on the number of preperiodic points of polynomial maps is $O(d^2/\log{d})$ (for \emph{most} cases of polynomial maps) has been the best estimate so far (and only for polynomials!). 

In the process of proving this theorem, we prove and use the following ``n-point lemmas'', which are generalizations of Three-Point Lemma A. The benefit of these lemmas over Four-Point Lemma A above is that they are independent of the map $\phi$, and therefore much more useful.

 
\begin{lemma}[Three-Point Lemma B] Let $K$ be a number field and $S$ a finite set of places of $K$ containing all the archimedean ones. Let $Q_1,Q_2,Q_3$ be three distinct points in $\p_1(K)$. Given a fixed choice of nonnegative integers $n_{i,\pid}$ for each $i\in\{1,2,3\}$ and each $\pid\notin S$, the set 
\beq\{P\in\p_1(K)\mid \dpid(P,Q_i)=n_{i,\pid}\ \text{for each $i\in\{1,2,3\}$ and $\pid\notin S$}\}\eeq
 has cardinality bounded by a number $B(|S|)$, depending only on $|S|$ \emph{(see Section~\ref{Appendix} for an explicit choice of $B(|S|)$)}.
\end{lemma}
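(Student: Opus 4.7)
The plan is to parametrize the set $\mathcal{P}$ by the cross-ratio with the fixed triple $(Q_1, Q_2, Q_3)$ and thereby reduce the problem to bounding the number of solutions of a unit equation inside a finitely generated subgroup of $(K^*)^2$. The starting identity is
\begin{equation*}
v_\mathfrak{p}([P_1, P_2; P_3, P_4]) = \delta_\mathfrak{p}(P_1, P_3) + \delta_\mathfrak{p}(P_2, P_4) - \delta_\mathfrak{p}(P_1, P_4) - \delta_\mathfrak{p}(P_2, P_3),
\end{equation*}
which I would verify by expanding the cross-ratio in homogeneous coordinates; the normalization terms $\min(v_\mathfrak{p}(x_i), v_\mathfrak{p}(y_i))$ appearing in the definition of $\delta_\mathfrak{p}$ cancel between the pairings in the numerator and denominator.

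Applying this identity with $\lambda(P) := [Q_1, Q_2; Q_3, P]$ and also with $1 - \lambda(P) = [Q_1, Q_3; Q_2, P]$ shows that, for every $\mathfrak{p} \notin S$, the valuations $v_\mathfrak{p}(\lambda(P))$ and $v_\mathfrak{p}(1 - \lambda(P))$ are determined by the fixed data $\{Q_1, Q_2, Q_3, n_{i,\mathfrak{p}}\}$ alone, and thus are the same for every $P \in \mathcal{P}$. Fixing a base solution $P_0 \in \mathcal{P}$ (if $\mathcal{P} = \emptyset$, we are done), the ratios $\lambda(P)/\lambda(P_0)$ and $(1-\lambda(P))/(1-\lambda(P_0))$ therefore have trivial valuation at every $\mathfrak{p} \notin S$ and are consequently $S$-units. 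Hence $(\lambda(P), 1 - \lambda(P))$ lies in a coset of $\mathcal{O}_S^* \times \mathcal{O}_S^*$ inside $(K^*)^2$ and satisfies the tautology $\lambda + (1-\lambda) = 1$. Note that $\lambda(P) \notin \{0, 1, \infty\}$, since $P$ cannot coincide with any $Q_i$ (such an equality would force infinite $\mathfrak{p}$-adic distance and violate $\delta_\mathfrak{p}(P, Q_i) = n_{i,\mathfrak{p}}$).

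The subgroup of $(K^*)^2$ generated by this coset has rank at most $2(|S|-1) + 1 = 2|S| - 1$ by Dirichlet's $S$-unit theorem. I would then invoke the Beukers--Schlickewei theorem, which bounds the number of solutions of $x + y = 1$ inside a finitely generated subgroup of $(\bar{K}^*)^2$ of rank $r$ by an explicit constant $2^{O(r)}$; since $P \mapsto \lambda(P)$ is an injection $\p_1(K) \setminus \{Q_1, Q_2, Q_3\} \to K^*\setminus\{1\}$, this yields the desired uniform bound $|\mathcal{P}| \leq B(|S|)$. The key technical point is the passage from a fixed coset to the subgroup it generates: this is precisely why we need the Beukers--Schlickewei refinement and cannot simply apply the classical Evertse bound used in Three-Point Lemma A, since the values $n_{i,\mathfrak{p}}$ can be nonzero on infinitely many primes a priori, preventing any reduction to a larger but finite $S'$-unit equation.
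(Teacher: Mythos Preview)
Your proof is correct and reaches the same bound $B(|S|)=2^{16|S|}$, but by a genuinely different route than the paper's. The paper works in $S$--radical coprime coordinates (which requires passing to the auxiliary extension $L$ constructed in Lemma~\ref{repre}), chooses explicit elements $C_i\in\sqrt{K^*}$ with $v_{\mathfrak{p}'}(C_i)=n_{i,\mathfrak{p}'}$, writes the conditions $\dpid(P,Q_i)=n_{i,\pid}$ as three linear equations $a_iy-b_ix=u_iC_i$ in $\rsu$, eliminates $x,y$, and applies Corollary~\ref{S-unit} (Beukers--Schlickewei with coefficients) to the resulting two-variable unit equation. Your argument bypasses all of this by observing directly, via the cross-ratio identity, that $v_\mfp(\lambda(P))$ and $v_\mfp(1-\lambda(P))$ are fixed functions of the data; anchoring at a base point $P_0$ then places $(\lambda(P),1-\lambda(P))$ in a single coset of $R_S^*\times R_S^*$, and you apply Beukers--Schlickewei to the rank-$\le(2|S|-1)$ subgroup it generates. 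Your approach has the advantage of staying entirely inside $K$ and never needing the $C_i$ or the class-group gymnastics; the paper's approach makes the linear algebra more visible and fits naturally with the coordinate framework used elsewhere in the article. One small correction to your closing remark: it is not that the $n_{i,\pid}$ could be nonzero for infinitely many $\pid$ (the paper notes the set is then empty), but rather that even when finitely many are nonzero, enlarging $S$ to absorb them would make the bound depend on those extra primes---this is the real reason one needs the coset/coefficient version of Beukers--Schlickewei rather than a naive $S'$-unit count.
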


\begin{lemma}[Four-Point Lemma B]
Let $K$ be a number field and $S$ a finite set of places of $K$ containing all the archimedean ones. Let $Q_1,Q_2,Q_3, Q_4$ be four distinct points in $\p_1(K)$. The set 
$$\mathcal{F}=\left\{P\in \p_1(K)\mid \dmfp(P,Q_1)=\dmfp(P,Q_2), \dmfp(P,Q_3)=\dmfp(P,Q_4), \forall\mfp\notin S\right\}$$
is finite with cardinality bounded by a number $C(3,|S|)+2,$ depending only on $|S|$ \emph{(see Section~\ref{Appendix} for an explicit choice of $C(3,|S|)$)}. 
\end{lemma}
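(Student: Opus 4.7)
The plan is to reduce the counting of $\mathcal{F}$ to a three-term $S$-unit equation via cross-ratio identities, then apply a uniform version of the Evertse--Schlickewei--Schmidt theorem. For each $P\in\mathcal{F}$ I would introduce the two cross-ratios
\[
u_P := [P, Q_3; Q_1, Q_2], \qquad v_P := [P, Q_1; Q_3, Q_4].
\]
Each is a M\"{o}bius function of $P$ and therefore determines $P$ uniquely; moreover the second is a M\"{o}bius transformation of the first, so there is a fixed non-trivial $T\in \PGL_2(K)$ with $v_P=T(u_P)$. Combining the standard identity
\[
v_\mfp([A,B;C,D]) = \dmfp(A,C) + \dmfp(B,D) - \dmfp(A,D) - \dmfp(B,C)
\]
with the hypotheses $\dmfp(P, Q_1)=\dmfp(P, Q_2)$ and $\dmfp(P, Q_3)=\dmfp(P, Q_4)$ for every $\mfp\notin S$, I would observe that $v_\mfp(u_P)$ and $v_\mfp(v_P)$ collapse to constants depending only on the $Q_i$ and $\mfp$, not on $P$. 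Assuming $\mathcal{F}\neq\emptyset$ and fixing a reference $P_0\in\mathcal{F}$, the ratios $x := u_P/u_{P_0}$ and $y := v_P/v_{P_0}$ then both lie in $\mathcal{O}_{K,S}^*$.

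Next I would translate the M\"{o}bius relation $v_P=T(u_P)$ into an algebraic equation in $x, y$. Writing $T(u)=(au+b)/(cu+d)$ with $ad-bc\neq 0$ and substituting $u_P=u_{P_0}x$, $v_P=v_{P_0}y$, then clearing denominators and dividing through by the nonzero constant term $b$ (note $T(0)=b/d$ is finite and nonzero since $T$ is a genuine permutation of three of the $Q_i$-classes), the relation rearranges to
\[
A\,xy + B\,x + C\,y = 1,
\]
with $A=cu_{P_0}v_{P_0}/b$, $B=-au_{P_0}/b$, $C=dv_{P_0}/b$ all nonzero and satisfying $A+BC = (u_{P_0}v_{P_0}/b^2)(bc-ad)\neq 0$. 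Viewing this as a three-term generalized unit equation with unknowns in $\mathcal{O}_{K,S}^*$, the uniform Evertse--Schlickewei--Schmidt bound caps the number of non-degenerate solutions by a quantity $C(3, |S|)$ depending only on $|S|$ (since $\mathcal{O}_{K,S}^*$ has rank $|S|-1$), uniformly in the coefficients $A, B, C$.

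Finally, I would dispatch the degenerate subsums. A case analysis shows that each vanishing two-term subsum involving one of $Bx=1$, $Cy=1$, $Axy+Bx=0$, or $Axy+Cy=0$ forces $A+BC=0$, which has been excluded; the only surviving degeneracy is $Bx+Cy=0$ paired with $Axy=1$. Eliminating $y$ from these two relations yields the quadratic $x^2=-C/(AB)$, contributing at most two further solutions $x\in\mathcal{O}_{K,S}^*$. Since each $x$ determines $u_P$ and hence $P$ uniquely, summing the two counts gives $|\mathcal{F}|\le C(3, |S|)+2$. The main obstacle is confirming that the unit-equation bound is genuinely uniform in the coefficients $A, B, C$, which is precisely the uniformity aspect of Evertse--Schlickewei--Schmidt; the rest is careful bookkeeping with cross-ratio valuations and the small case analysis of degenerate subsums just sketched.
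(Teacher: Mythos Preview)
Your argument is correct and lands on the same three-term unit equation $A\,xy+B\,x+C\,y=1$ (equivalently $Au+Bv+Cuv=1$) that the paper derives, followed by the identical Evertse--Schlickewei--Schmidt bound and the same degenerate-subsum bookkeeping yielding the extra $+2$. The route to that equation, however, is genuinely different. The paper normalizes $Q_1=[0:1]$, passes to a finite extension $L$ to write every point in $S$-radical coprime coordinates, and reads off the units $u,v$ directly from the equalities $\dmfp(P,Q_1)=\dmfp(P,Q_2)$ and $\dmfp(P,Q_3)=\dmfp(P,Q_4)$. You instead package everything via the two cross-ratios $u_P,v_P$, use the valuation identity for cross-ratios to see that $v_\mfp(u_P)$ and $v_\mfp(v_P)$ are independent of $P$, and then take ratios against a fixed $P_0\in\mathcal{F}$ to produce $S$-units in $K$ itself. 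This buys you a coordinate-free argument that never leaves $K$ and sidesteps the $S$-radical coprime machinery; the price is the extra verification that $a,b,c,d,u_{P_0},v_{P_0}$ are all nonzero (which follows from the distinctness of the $Q_i$, and you should note explicitly that no $Q_i$ lies in $\mathcal{F}$, since $\dmfp(Q_i,Q_i)=\infty$). Your exclusion of the degenerate subsums via $A+BC=(u_{P_0}v_{P_0}/b^2)(bc-ad)\neq 0$ is exactly equivalent to the paper's ``because the points $Q_i$ are distinct,'' and the surviving degeneracy $Bx+Cy=0$, $Axy=1$ matches the paper's $Au+Bv=0$ case giving at most two solutions.
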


\begin{lemma}[Three-Point Lemma C]
Let $K$ be a number field and $S$ a finite set of places of $K$ containing all the archimedean ones. Let $Q_1,Q_2,Q_3$ be three distinct points in $\p_1(K)$. The set 
$$\mathcal{T}=\left\{P\in \p_1(K)\mid \dmfp(P,Q_1)=\dmfp(P,Q_2)=\dmfp(P,Q_3), \mfp\notin S\right\}$$
is finite with cardinality bounded by a number $B(|S|)$ \emph{(same bound as in Three-Point Lemma B)}. 
\end{lemma}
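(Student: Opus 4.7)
The plan is to convert the equidistance condition into a generalized two-variable $S$-unit equation via cross-ratios, following the same underlying strategy that supports Three-Point Lemma B. For any $P\in\mathcal{T}$, I would introduce the two cross-ratios
\[
\lambda_1 := \frac{(P-Q_2)(Q_1-Q_3)}{(P-Q_3)(Q_1-Q_2)}, \qquad \lambda_2 := \frac{(P-Q_1)(Q_2-Q_3)}{(P-Q_3)(Q_2-Q_1)},
\]
which satisfy the elementary identity $\lambda_1+\lambda_2=1$ (and moreover the constants $\mu_1 := (Q_1-Q_3)/(Q_1-Q_2)$ and $\mu_2 := (Q_2-Q_3)/(Q_2-Q_1)$ themselves satisfy $\mu_1+\mu_2=1$).

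Choosing homogeneous representatives for $P,Q_1,Q_2,Q_3$ so that the $\min$-terms in the projective definition of $\dmfp$ vanish at each $\mfp\notin S$, one obtains
\[
v_\mfp(\lambda_1) = \dmfp(P,Q_2) + \dmfp(Q_1,Q_3) - \dmfp(P,Q_3) - \dmfp(Q_1,Q_2).
\]
The defining condition $P\in\mathcal{T}$ forces $\dmfp(P,Q_2)=\dmfp(P,Q_3)$, so the $P$-dependent contribution cancels and $v_\mfp(\lambda_1)$ reduces to $v_\mfp(\mu_1)$, a number that depends only on the fixed points $Q_i$. The analogous computation for $\lambda_2$ yields $v_\mfp(\lambda_2)=v_\mfp(\mu_2)$. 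Consequently, $u_1 := \lambda_1/\mu_1$ and $u_2 := \lambda_2/\mu_2$ both lie in $R_S^*$.

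Substituting back into the cross-ratio identity produces the generalized $S$-unit equation
\[
\mu_1\, u_1 + \mu_2\, u_2 = 1, \qquad (u_1,u_2)\in (R_S^*)^2,
\]
with fixed coefficients $\mu_1,\mu_2\in K^*$. Evertse's theorem, applied to the finitely generated subgroup $\langle R_S^*,\mu_1,\mu_2\rangle\leq K^*$ (whose rank exceeds that of $R_S^*$ by at most two, hence is still a function of $|S|$ alone), bounds the number of such solutions by exactly the quantity $B(|S|)$ that controls Three-Point Lemma B. Since each solution $(u_1,u_2)$ determines $\lambda_1$, and for fixed $Q_1,Q_2,Q_3$ the value of $\lambda_1$ recovers $P$ uniquely, we deduce $|\mathcal{T}|\leq B(|S|)$.

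The main obstacle is the delicate bookkeeping surrounding the $\min$-terms in the projective definition of $\dmfp$: one must choose scaled homogeneous coordinates for each of $P, Q_1, Q_2, Q_3$ so that the minima vanish at every $\mfp\notin S$ and the compensating terms cancel in the cross-ratio, legitimizing the valuation formula above. The crucial structural feature that distinguishes Three-Point Lemma C from Three-Point Lemma B—and allows a single application of the $S$-unit equation bound rather than a sum over an a priori unbounded collection of tuples $(n_{i,\mfp})$—is precisely that the equidistance condition fixes the target valuations $v_\mfp(\lambda_j)$ \emph{independently of $P$}, determined solely by the points $Q_1,Q_2,Q_3$.
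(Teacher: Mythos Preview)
Your argument is correct and coincides with the paper's: the paper simply records that the proof of Three-Point Lemma B goes through verbatim (when $n_{1,\mfp}=n_{2,\mfp}=n_{3,\mfp}$ the auxiliary factors $C_i$ cancel in the key linear relation), and your cross-ratio identity $\mu_1 u_1+\mu_2 u_2=1$ is precisely the resulting two-variable $S$-unit equation written in different packaging. One minor correction: there is no need to pass to the enlarged group $\langle R_S^*,\mu_1,\mu_2\rangle$; since $\mu_1,\mu_2$ are fixed coefficients, the inhomogeneous Beukers--Schlickewei bound (Corollary~\ref{S-unit}) applies directly with $\Gamma_0=\sqrt{R_S^*}$ of rank $|S|-1$ and yields exactly $2^{16|S|}=B(|S|)$, whereas enlarging the group would inflate the exponent and would not give ``exactly $B(|S|)$'' as you claim.
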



\section{Preliminaries}
%
%
%
In the present article we will use the following notation:

\begin{notation}
$K$ a number field;

$\bar{K}$ an algebraic closure of $K$;

$R$ the ring of integers of $K$;

$\pid$ a non-zero prime ideal of $R$;

$v_{\pid}$ the $\pid$-adic valuation on $K$ corresponding to the prime ideal $\pid$ (we always assume

$v_{\pid}$ to be normalized so that  $v_{\pid}(K^*)=\mathbb{Z}$);

$S$ a fixed finite set of places of $K$ including all archimedean
places;

$R_S = \{x \in K : v _\pid (x) \geq 0 \mbox{    for every prime ideal     } \pid \notin S\}$ the ring of $S$-integers;

$R^{*}_S = \{x \in K : v _\pid (x) = 0 \mbox{    for every prime ideal     } \pid \notin S\}$ the group of $S$-units;

$\Per(\phi,K)$ the set of $K$-rational periodic points of $\phi$;

$\Tail(\phi,K)$ the set of $K$-rational tail points of $\phi$;

$\PrePer(\phi,K)$ the set of $K$-rational preperiodic points of $\phi$;

${\rm Tail}(\phi,K, P)$ the set of $K$-rational tail points of $P$ with respect to $\phi$.

$\Per_0(\phi,K)$ the set of $K$-rational periodic points belonging to some critical cycle of $\phi$.

\end{notation}

We start this section by recalling the definition of the $\mfp$-adic logarithmic distance on $\p_1(K)$ for a finite place $\pid$ in a number field $K$.

\begin{definition}
Let $P_1=[x_1:y_1]$ and $P_2=[x_2:y_2]$ be points in $\p_1(K)$. We will denote by 
$$ \dmfp(P_1,P_2)=v_\pid(x_1y_2-x_2y_1)-\min\{v_{\pid}(x_1),v_{\pid}(y_1) \} -\min\{v_{\pid}(x_2),v_{\pid}(y_2) \} $$
the $\mfp$-adic logarithmic distance on $\p_1(K)$ between the points $P_1$ and $P_2$.
\end{definition}

Note that $\delta_\pid (P_1,P_2)$ is independent of the choice of homogeneous coordinates and $\dmfp(P_1,P_2)$ is $0$ if and only if the points $P_1$ and $P_2$ are distinct modulo $\pid$. 

%

The following definition introduces the idea of normalized forms with respect to $\pid$.

\begin{definition} \quad\\
\begin{enumerate}
\item We say that $P =[x:y] \in\p_1(K)$ is in normalized form with respect to $\pid$ if
$$\min\{ v_\pid(x), v_\pid(y)\} = 0 .$$

\item Let $\phi$ be an endomorphism of $\p_1$, defined over $K$. Assume $\phi$ is given by
$$ \phi = [F(X,Y):G(X,Y)]$$
where $F,G \in K[X,Y]$ are homogeneous polynomials with no common factors. We say that the pair $(F,G)$ is normalized with respect to $\pid$ or that $\phi$ is in normalized form with respect to $\pid$ if $F,G\in R_\pid[X,Y]$ and at least one coefficient of $F$ or $G$ is not in the maximal ideal of $R_\pid$. Equivalently, $\phi=[F:G]$ is normalized with respect to $\pid$ if
$$F(X,Y)=a_0X^d+a_1X^{d-1}Y+...+a_{d-1}XY^{d-1}+a_dY^d $$
and
$$G(X,Y)=b_0X^d+b_1X^{d-1}Y+...+b_{d-1}XY^{d-1}+b_dY^d $$
satisfy
$$\min\{v_\pid(a_0),..., v_\pid(a_d), v_\pid(b_0), ....., v_\pid(b_d)\} =0.$$

\end{enumerate}
\end{definition}

\begin{remark} \label{Remark2.5}
Note that if $P=[x_1:x_2]$ and $Q=[y_1:y_2]$ are in normalized form with respect to $\pid$ then $\delta_\pid(P_1,P_2)=v_{\pid}(x_1y_2-x_2y_1).$
\end{remark}

Since $R_\pid$ is a discrete valuation ring, we can always find a representation of $P$ and $\phi$ in normalized form with respect to $\pid$. However, it is not always true that the same representation is normalized for every $\pid$.

\begin{definition}
Consider $P\in\p_1(K)$ and write $P=[a:b]$ with $a,b\in R_S$. We say that $[a:b]$ are \emph{$S$-coprime coordinates} for $P$ if $\min\{v_\pid(a),v_\pid(b) \}$=0 for every prime $\pid\notin S$.
\end{definition}

Even though the definition of good reduction was given in the introduction, we recall the definition below for convenience of the reader.

\begin{definition}
Let $\phi $ be an endomorphism of $\p_1$, defined over $K$ and write $\phi=[F:G]$ in normalized form with respect to $\pid$. We say that $\phi$ has \emph{good reduction} at $\pid$ if $\tilde{F}(X,Y)=\tilde{G}(X,Y)=0$ has no solutions in $\p_1(\bar{k})$, where $\tilde{F}$ and $\tilde{G}$ are the reductions of $F$ and $G$ modulo $\pid$ respectively and $k$ is the residue field of $R_\pid$. We say that $\phi$ has \emph{good reduction outside $S$} if $\phi$ has good reduction at $\pid$ for every $\pid \notin S$.
\end{definition}

Let $\phi$ be an endomorphism of $\PP^1$, defined over $K$ and $P \in \p_1 (K)$. We say that a point $Q\in \PP^1(K)$ is in the \emph{tail of} $P$ if $P$ is in the orbit of $Q$ under $\phi$. We denote ${\rm Tail}(\phi,K, P)$ the set of $K$-rational tail points of $P$ with respect to $\phi$. Notice that a point $R\in \p_1(K)$ is a tail point if it is non-periodic and in the tail of some periodic point. We define the \emph{tail length} of a tail point $R$ as the minimal natural number $n$ such that $\phi^n(P)$ is periodic. 

Between the many interesting properties of the $\pid$-adic logarithmic distance we state the following that will be relevant for our work.

\begin{proposition}\label{5.1}\emph{\cite[Proposition 5.1]{MS1995}} For all $P_1,P_2,P_3\in\mathbb{P}_1(K)$, we have
\beq\label{pro:td}\delta_{\mathfrak{p}}(P_1,P_3)\geq \min\{\delta_{\mathfrak{p}}(P_1,P_2),\delta_{\mathfrak{p}}(P_2,P_3)\}.\eeq
\end{proposition}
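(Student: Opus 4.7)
The plan is to prove this ultrametric-type inequality by working with normalized coordinates and exhibiting an algebraic identity that expresses $x_1 y_3 - x_3 y_1$ as a linear combination of $x_1 y_2 - x_2 y_1$ and $x_2 y_3 - x_3 y_2$, then applying the non-archimedean property of $v_\pid$.

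Concretely, I would begin by choosing, for each $i \in \{1,2,3\}$, a representation $P_i = [x_i : y_i]$ in normalized form with respect to $\pid$, so that $\min\{v_\pid(x_i), v_\pid(y_i)\} = 0$. This is possible because $R_\pid$ is a DVR, and Remark 2.5 then gives the clean formula $\delta_\pid(P_i, P_j) = v_\pid(x_i y_j - x_j y_i)$. The main algebraic step is the pair of identities
\begin{align*}
 y_2 \bigl( x_1 y_3 - x_3 y_1 \bigr) &= y_3 (x_1 y_2 - x_2 y_1) + y_1 (x_2 y_3 - x_3 y_2), \\
 x_2 \bigl( x_1 y_3 - x_3 y_1 \bigr) &= x_3 (x_1 y_2 - x_2 y_1) + x_1 (x_2 y_3 - x_3 y_2),
\end{align*}
both of which are verified by direct expansion (they are essentially the $2\times 2$ Plücker/Cramer identity for three vectors in $K^2$).

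Next I would use the fact that $P_2$ is in normalized form: at least one of $x_2, y_2$ is a $\pid$-unit. If $v_\pid(y_2) = 0$, the first identity gives
\[
 \delta_\pid(P_1,P_3) = v_\pid(y_2) + v_\pid(x_1 y_3 - x_3 y_1) \geq \min\bigl\{ v_\pid(y_3) + \delta_\pid(P_1,P_2),\, v_\pid(y_1) + \delta_\pid(P_2,P_3) \bigr\},
\]
by the ultrametric inequality for $v_\pid$; since $v_\pid(y_1), v_\pid(y_3) \geq 0$ (normalized coordinates lie in $R_\pid$), this is bounded below by $\min\{\delta_\pid(P_1,P_2), \delta_\pid(P_2,P_3)\}$. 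If instead $v_\pid(x_2) = 0$, the second identity yields the same conclusion, replacing $y_1, y_3$ by $x_3, x_1$ respectively, whose valuations are again nonnegative.

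There is no real obstacle here beyond spotting the right two-term identity and invoking the ultrametric inequality; the normalization of $P_2$ is used precisely to divide out by $y_2$ or $x_2$ without losing a positive valuation, while the normalization of $P_1$ and $P_3$ is used to discard the auxiliary factors $y_1, y_3$ (or $x_1, x_3$) with a one-sided inequality. This completes the proof.
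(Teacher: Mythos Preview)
Your proof is correct. The paper does not supply its own argument for this proposition; it is quoted verbatim from \cite[Proposition~5.1]{MS1995} without proof. Your approach---normalizing coordinates, invoking the two-term Pl\"ucker-type identities, and applying the ultrametric inequality after splitting on whether $x_2$ or $y_2$ is a $\pid$-unit---is precisely the standard proof and matches what Morton and Silverman do in the cited reference.
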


\begin{proposition}\label{5.2}\emph{\cite[Proposition 5.2]{MS1995}}
Let $\phi$ be an endomorphism of $\p_1$ defined over $K$ with good reduction at $\pid$. Then for any $P,Q\in\p(K)$ we have
\beq\label{pro:igr}\delta_{\mathfrak{p}}(\phi(P),\phi(Q))\geq \delta_{\mathfrak{p}}(P,Q).\eeq
\end{proposition}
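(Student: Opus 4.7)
The plan is to reduce everything to a single polynomial divisibility identity and then use good reduction to control the normalization constants in the definition of $\delta_{\mathfrak p}$. First I would choose coordinates $P=[x_1:y_1]$ and $Q=[x_2:y_2]$ in normalized form with respect to $\mathfrak p$, and write $\phi=[F:G]$ in normalized form with respect to $\mathfrak p$, which is possible because $R_{\mathfrak p}$ is a discrete valuation ring. Under this choice, Remark~\ref{Remark2.5} gives the clean formula
\[
\delta_{\mathfrak p}(P,Q)=v_{\mathfrak p}(x_1 y_2 - x_2 y_1).
\]

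The key algebraic input is the identity, valid for any pair of homogeneous polynomials $F,G$ of degree $d$ with coefficients in any commutative ring,
\[
F(x_1,y_1)G(x_2,y_2)-F(x_2,y_2)G(x_1,y_1) \;=\; (x_1 y_2 - x_2 y_1)\,H(x_1,y_1,x_2,y_2),
\]
where $H$ is a polynomial whose coefficients are integer-linear combinations of the coefficients of $F$ and $G$. This divisibility is immediate: the left-hand side vanishes identically when one substitutes $y_2=\lambda y_1$, $x_2=\lambda x_1$ (using homogeneity of degree $d$), hence $x_1 y_2 - x_2 y_1$ divides the left-hand side in $R_{\mathfrak p}[x_1,y_1,x_2,y_2]$. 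Since the coefficients of $F$ and $G$ lie in $R_{\mathfrak p}$, the quotient $H$ also has coefficients in $R_{\mathfrak p}$, so $v_{\mathfrak p}(H(x_1,y_1,x_2,y_2))\ge 0$ and therefore
\[
v_{\mathfrak p}\!\bigl(F(x_1,y_1)G(x_2,y_2)-F(x_2,y_2)G(x_1,y_1)\bigr)\;\ge\; v_{\mathfrak p}(x_1 y_2 - x_2 y_1)\;=\;\delta_{\mathfrak p}(P,Q).
\]

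Next I would use good reduction to show that the pair $[F(x_i,y_i):G(x_i,y_i)]$ is \emph{already} in normalized form for $i=1,2$, so that the left-hand side above is exactly $\delta_{\mathfrak p}(\phi(P),\phi(Q))$. Indeed, if both $F(x_i,y_i)$ and $G(x_i,y_i)$ lay in $\mathfrak p$, then the reduction $[\tilde x_i:\tilde y_i]$ of $P$ (respectively $Q$) modulo $\mathfrak p$ would be a common zero of $\tilde F$ and $\tilde G$ in $\mathbb P_1(\bar k)$, contradicting the assumption of good reduction. Hence $\min\{v_{\mathfrak p}(F(x_i,y_i)),v_{\mathfrak p}(G(x_i,y_i))\}=0$ for $i=1,2$, and the definition of $\delta_{\mathfrak p}$ gives
\[
\delta_{\mathfrak p}(\phi(P),\phi(Q))=v_{\mathfrak p}\!\bigl(F(x_1,y_1)G(x_2,y_2)-F(x_2,y_2)G(x_1,y_1)\bigr).
\]
Combining with the previous inequality yields the claim.

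The only real obstacle is verifying the polynomial identity with the divisor $(x_1y_2-x_2y_1)$ and checking that it holds integrally (so that $H$ has coefficients in $R_{\mathfrak p}$ rather than in a fraction field); this is purely formal from the homogeneity of $F$ and $G$. The good reduction hypothesis enters in exactly one place, namely to ensure the image coordinates stay normalized so that no ``extra'' positive valuation has to be subtracted when computing $\delta_{\mathfrak p}(\phi(P),\phi(Q))$.
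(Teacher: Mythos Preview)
Your argument is correct and is essentially the standard proof from Morton--Silverman \cite{MS1995}; the paper itself does not supply a proof of this proposition but merely cites it, so there is nothing further to compare.
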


A direct application of the previous two propositions was deduced by Canci and Paladino \cite{CP2014}.

\begin{lemma}\label{pab} \emph{\cite[Lemma 4.1]{CP2014}}
Let $\phi$ be an endomorphism of $\p_1$ defined over $K$ with good reduction at $\pid$. Let $P_0\in\p_1(K)$ be a fixed point of $\phi$. Let $a,b$ be integers with $0<a<  b$ and $P_b,P_a\in\p_1(K)$ such that $\phi^b(P_b)=\phi^a(P_a)=P_0$ and $\phi^{b-a}(P_b)=P_a$. Then
\beq\label{eqdis}\dpid(P_{b},P_{a})=\dpid(P_{b},P_0)\leq \dpid(P_{a},P_0).\eeq
\end{lemma}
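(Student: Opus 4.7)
The plan is to prove the chain $\dpid(P_b, P_a) = \dpid(P_b, P_0) \leq \dpid(P_a, P_0)$ in two pieces. The inequality $\dpid(P_b, P_0) \leq \dpid(P_a, P_0)$ is immediate from Proposition \ref{5.2} applied to the iterate $\phi^{b-a}$, which inherits good reduction at $\pid$ as a composition of $\phi$ with itself: since $P_0$ is fixed by $\phi$ and $\phi^{b-a}(P_b) = P_a$ by hypothesis, one obtains $\dpid(P_a, P_0) = \dpid(\phi^{b-a}(P_b), \phi^{b-a}(P_0)) \geq \dpid(P_b, P_0)$.

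For the equality $\dpid(P_b, P_a) = \dpid(P_b, P_0)$, the ``$\geq$'' direction follows from the non-archimedean triangle inequality (Proposition \ref{5.1}) applied to the triple $(P_b, P_0, P_a)$, combined with the previous step: $\dpid(P_b, P_a) \geq \min\{\dpid(P_b, P_0), \dpid(P_0, P_a)\} = \dpid(P_b, P_0)$. The reverse inequality I would prove by contradiction. Assume $\dpid(P_b, P_a) > \dpid(P_b, P_0)$. The sub-case $\dpid(P_b, P_0) < \dpid(P_a, P_0)$ is eliminated at once by another instance of Proposition \ref{5.1} on the same triangle: both $\dpid(P_b, P_a)$ and $\dpid(P_a, P_0)$ then strictly exceed $\dpid(P_b, P_0)$, contradicting $\dpid(P_b, P_0) \geq \min\{\dpid(P_b, P_a), \dpid(P_a, P_0)\}$. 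The real work is concentrated in the degenerate sub-case $\dpid(P_b, P_0) = \dpid(P_a, P_0) =: n$ with $\dpid(P_b, P_a) > n$.

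To dispose of this last sub-case I would introduce the forward $\phi^{b-a}$-orbit of $P_a$, namely $P_a^{(k)} := \phi^{k(b-a)}(P_a)$, and exploit that each $P_a^{(k)}$ is a preimage of $P_0$ under $\phi^a$ (using $\phi^a(P_a) = P_0$ and that $\phi^{b-a}$ fixes $P_0$), so the entire orbit lies inside the finite set $(\phi^a)^{-1}(P_0)$. Iterating Proposition \ref{5.2} on successive pairs and combining with the ultrametric inequality on the triangles $(P_a^{(k)}, P_a^{(k+1)}, P_0)$, I would show by induction that $\dpid(P_a^{(k)}, P_0) = n$ while $\dpid(P_a^{(k)}, P_a^{(k+1)}) > n$ for every $k \geq 0$. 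Any collision $P_a^{(k)} = P_a^{(k+l)}$ with $l \geq 1$ would make $P_a^{(k)}$ periodic under $\phi$, and together with $\phi^a(P_a^{(k)}) = P_0$ this would force the cycle of $P_a^{(k)}$ to coincide with the singleton orbit of the fixed point $P_0$, hence $P_a^{(k)} = P_0$; but this contradicts $\dpid(P_a^{(k)}, P_0) = n < \infty$. Thus the $P_a^{(k)}$ are pairwise distinct — an infinite sequence inside a finite set, which is absurd.

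The hard part will be precisely this degenerate sub-case $\dpid(P_b, P_0) = \dpid(P_a, P_0)$: the ultrametric inequality alone cannot force $\dpid(P_b, P_a) = \dpid(P_b, P_0)$ there, and one must use in an essential way both that $P_0$ is a \emph{fixed} point (not merely periodic) and that the iterates $P_a^{(k)}$ remain trapped inside the finite $\phi^a$-fiber above $P_0$.
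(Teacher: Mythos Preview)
The paper does not include its own proof of this lemma; it simply cites \cite[Lemma~4.1]{CP2014} and describes the result as ``a direct application of the previous two propositions'' (Propositions~\ref{5.1} and~\ref{5.2}), so there is no in-paper argument to compare yours against.

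Your proof is correct. The inequality $\dpid(P_b,P_0)\leq\dpid(P_a,P_0)$ and the ``$\geq$'' half of the equality do follow immediately from Propositions~\ref{5.1} and~\ref{5.2} as you say. For the remaining direction, your elimination of the sub-case $\dpid(P_b,P_0)<\dpid(P_a,P_0)$ via the ultrametric inequality is clean, and the handling of the degenerate sub-case $\dpid(P_b,P_0)=\dpid(P_a,P_0)=n$ is sound: the induction you sketch (that $\dpid(P_a^{(k)},P_0)=n$ and $\dpid(P_a^{(k)},P_a^{(k+1)})>n$ for all $k\geq 0$) goes through using Proposition~\ref{5.2} for the lower bounds and Proposition~\ref{5.1} on the triangle $(P_a^{(k)},P_a^{(k+1)},P_0)$ for the upper bound on $\dpid(P_a^{(k+1)},P_0)$; the strict inequality $\dpid(P_b,P_a)>n$ forces $n<\infty$, so a collision $P_a^{(k)}=P_a^{(k+l)}$ would indeed yield the contradiction $P_a^{(k)}=P_0$ against $\dpid(P_a^{(k)},P_0)=n$. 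The finiteness of $(\phi^a)^{-1}(P_0)$ then finishes it. One minor stylistic point: you could shorten the argument slightly by noting that the orbit $(P_a^{(k)})_{k\geq 0}$, being an iteration sequence in a finite set, is eventually periodic, and the periodic part must equal $\{P_0\}$ --- so some $P_a^{(k_0)}=P_0$, contradicting $\dpid(P_a^{(k_0)},P_0)=n$ directly. This avoids phrasing it as ``pairwise distinct'', but the content is the same.
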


\section{Dynamical properties of the logarithmic distance}

As mentioned in the introduction, the second author proved in \cite{T2017} a strong arithmetic relation between $K$-rational tail points and $K$-rational periodic points.  We state this arithmetic relation below:

\begin{proposition} \label{troncoso}  \emph{\cite[Corollary 2.23]{T2017}}
Let $\phi$ be an endomorphism of $\p_1$, defined over $K$. Suppose $\phi$ has good reduction outside $S$. Let $R\in\p_1(K)$ be a tail point and let $n$ be the period of the periodic part of the orbit of $R$. Let $P\in\p_1(K)$ be any periodic point that is not $\phi^{mn}(R)$ for some $m$. Then $\forall \pid \notin S \quad \dpid(P,R)=0$.
\end{proposition}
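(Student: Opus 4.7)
The plan is to argue by contradiction: fix a prime $\pid\notin S$ and assume $\dpid(P,R)>0$. Let $m$ denote the period of $P$, let $T$ denote the tail length of $R$, and set $N=\lceil T/n\rceil$, so that $Q:=\phi^{Nn}(R)$ is the unique periodic point appearing in the set $\{\phi^{kn}(R)\}_{k\ge 0}$ (indeed, for $kn\ge T$ one has $\phi^{kn}(R)=Q$ since $n\mid kn$, which makes the position in the cycle independent of $k$); by hypothesis $P\neq Q$. First, iterating Proposition~\ref{5.2} yields $\dpid(\phi^k(P),\phi^k(R))\ge\dpid(P,R)>0$ for every $k\ge 0$. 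Taking $k$ to be any sufficiently large multiple of $\lcm(m,n)$ forces $\phi^k(P)=P$ and $\phi^k(R)=Q$, so $\dpid(P,Q)>0$; then Proposition~\ref{5.1} gives $\dpid(R,Q)\ge\min\{\dpid(R,P),\dpid(P,Q)\}>0$. Thus $P$, $R$, and $Q$ all lie in the residue disc $D$ around $\bar Q$ modulo $\pid$.

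The next step is a local analysis of $\phi^n$ on $D$. Since $\phi^n$ inherits good reduction outside $S$ and $Q$ is a fixed point of $\phi^n$, the disc $D$ is $\phi^n$-stable, and (after choosing an affine coordinate in which $Q\neq\infty$) one has a local expansion
\[
\phi^n(Q+u)=Q+au+bu^2+\cdots,\qquad a=(\phi^n)'(Q)=\prod_{i=0}^{n-1}\phi'(\phi^i(Q))\in R_\pid,
\]
with all coefficients in $R_\pid$ and convergence valid on $D$. I would then split into cases according to whether $a$ is a $\pid$-unit.

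If $v_\pid(a)=0$, then for $z=Q+u\in D$ (so $v_\pid(u)\ge 1$) the leading term $au$ strictly dominates, because each higher term $bu^2,\ldots$ has valuation at least $2v_\pid(u)>v_\pid(u)$; hence $v_\pid(\phi^n(z)-Q)=v_\pid(z-Q)$, and iterating, $v_\pid((\phi^n)^k(z)-Q)=v_\pid(z-Q)$ for every $k$. Thus no point of $D\setminus\{Q\}$ can ever iterate to $Q$ under $\phi^n$, but $(\phi^n)^N(R)=Q$ with $R\in D\setminus\{Q\}$, a contradiction.

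If instead $v_\pid(a)\ge 1$, the same term-by-term comparison yields $v_\pid(\phi^n(z)-Q)\ge v_\pid(z-Q)+1$ on $D\setminus\{Q\}$, so $Q$ is an attracting fixed point of $\phi^n$ and $D$ lies in its basin of attraction. Any periodic orbit of $\phi^n$ inside this basin must collapse to $\{Q\}$, since a $\pid$-adically convergent periodic sequence in $\PP^1(K_\pid)$ is constant; this forces $P=Q$ and contradicts $P\neq Q$. The main delicate point is to justify, from good reduction, that the local power-series expansion of $\phi^n$ at $Q$ has coefficients in $R_\pid$ and converges on the entire reduction disc $D$, so that the valuation estimates above apply globally on $D$ rather than only in some infinitesimal neighborhood of $Q$.
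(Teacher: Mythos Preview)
The paper does not give its own proof of this proposition; it is quoted verbatim from \cite[Corollary~2.23]{T2017} and used as a black box throughout Section~3 and Section~\ref{Bounds on tail points}. So there is no in-paper argument to compare yours against.

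That said, your argument is sound. The reduction to the residue disc $D$ around $\bar Q$ via Propositions~\ref{5.1} and~\ref{5.2} is clean, and the dichotomy on $v_\mfp\bigl((\phi^n)'(Q)\bigr)$ correctly separates the indifferent case (distance to $Q$ is preserved on $D$, so a tail point in $D\setminus\{Q\}$ cannot iterate to $Q$) from the attracting case (a nontrivial $\phi^n$-cycle in $D$ would force $v_\mfp(P-Q)>v_\mfp(P-Q)$ after one loop). The ``delicate point'' you flag is real but can be handled without infinite series: after a $\PGL_2(R_\mfp)$-change of coordinates sending $Q$ to $0$, good reduction of $\phi^n$ forces the dehomogenized denominator $g$ of $\phi^n=f/g$ (with $f,g\in R_\mfp[z]$ in normalized form) to satisfy $v_\mfp(g(0))=0$, hence $g$ is a $\mfp$-unit everywhere on $D$. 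Factoring $f(z)g(0)-f(0)g(z)=z\,h(z)$ with $h\in R_\mfp[z]$ gives
\[
\phi^n(z)-Q=(z-Q)\cdot\frac{h(z)}{g(z)g(Q)},\qquad v_\mfp(h(Q))=v_\mfp\bigl((\phi^n)'(Q)\bigr),
\]
and reducing $h$ modulo $\mfp$ on $D$ immediately yields your two valuation estimates. This sidesteps convergence entirely and makes the local analysis rigorous with only finite polynomial algebra.
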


In a similar vein, the first and third author proved the following proposition relating periodic points and points belonging to a critical cycle.

\begin{proposition} \label{cv-crit}  \emph{\cite[Corollary 2.6]{CV2016}}
Let $\phi$ be an endomorphism of $\p_1$, defined over $K$. Suppose $\phi$ has good reduction outside $S$. Let $P\in\p_1(K)$ be a periodic point and and let $Q\in\p_1(K)$ belong to a critical cycle. Then $\forall \pid \notin S \quad \dpid(P,Q)=0$.
\end{proposition}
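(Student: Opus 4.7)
The plan is to run a monotonicity argument along the common periodic orbit of $P$ and $Q$ and to derive a contradiction from the ramification at a critical point of $\phi$ lying in $Q$'s cycle. Both $P$ and $Q$ are periodic (say of periods $a$ and $b$), so setting $m:=\operatorname{lcm}(a,b)$, iterating Proposition~\ref{5.2} along the orbit gives
\[
\dpid(P,Q)\leq \dpid(\phi(P),\phi(Q))\leq \cdots \leq \dpid(\phi^m(P),\phi^m(Q)) = \dpid(P,Q).
\]
Every intermediate inequality is therefore an equality, so $\dpid(\phi^i(P),\phi^i(Q))=\dpid(P,Q)$ for every $i\geq 0$.

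Next, I would use the hypothesis that $Q$ lies in a critical cycle: some iterate $Q_k:=\phi^k(Q)$ is a critical point of $\phi$ of local ramification index $e\geq 2$. Setting $P_k:=\phi^k(P)$, the previous step gives $\dpid(P_k,Q_k)=\dpid(P,Q)$. The key local claim is that being critical strengthens Proposition~\ref{5.2} to a strict inequality whenever $P_k$ and $Q_k$ share a residue class modulo $\pid$: if $\dpid(P_k,Q_k)>0$, then
\[
\dpid(\phi(P_k),\phi(Q_k))\geq e\cdot \dpid(P_k,Q_k) > \dpid(P_k,Q_k).
\]
To prove this, I would conjugate by a $\PGL_2(R_\pid)$ transformation (which preserves good reduction) to send $Q_k$ and $\phi(Q_k)$ to $[0:1]$; in the resulting affine chart $\phi(z)=a_ez^e+a_{e+1}z^{e+1}+\cdots$, and good reduction forces the denominator $G(0,1)$ of the normalized model to be a $\pid$-unit, which in turn makes the leading Taylor coefficient $a_e$ a $\pid$-unit. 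The inequality is then a direct computation using the ultrametric.

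Combining everything: if $\dpid(P,Q)>0$, then $\dpid(P_k,Q_k)>0$, whence the local claim yields $\dpid(P_{k+1},Q_{k+1})>\dpid(P_k,Q_k)=\dpid(P,Q)$, contradicting $\dpid(P_{k+1},Q_{k+1})=\dpid(P,Q)$ from the first step. Hence $\dpid(P,Q)=0$ for every $\pid\notin S$. The main obstacle will be establishing the local strict-inequality claim at a critical point — specifically, verifying that good reduction is enough to make the leading Taylor coefficient $a_e$ a $\pid$-unit, so that the distance improves by a factor of $e$ rather than merely staying constant. Once this local fact is in hand, the rest is a mechanical iteration of Proposition~\ref{5.2}.
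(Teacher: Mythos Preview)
The paper does not supply its own proof of this proposition; it is quoted directly from \cite[Corollary~2.6]{CV2016}. Your strategy is exactly the one used there: force every inequality coming from Proposition~\ref{5.2} along a common period to be an equality, and then derive a contradiction from the extra gain at a critical point in the cycle of $Q$.

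Two details in your local step need correction, though neither breaks the overall argument. First, you cannot ``conjugate'' so as to send both $Q_k$ and $\phi(Q_k)$ to $[0:1]$ when these are distinct points (which they are unless the critical cycle has length~$1$). What you actually want is to move $Q_k$ to $[0:1]$ by some $g\in\PGL_2(R_\pid)$ and, if you like, move $\phi(Q_k)$ to $[0:1]$ by a \emph{separate} $h\in\PGL_2(R_\pid)$; the map $h\circ\phi\circ g^{-1}$ still has good reduction, and $\dpid$ is unchanged on both source and target, so the local computation goes through. Second, your claim that good reduction forces the leading Taylor coefficient $a_e$ to be a $\pid$-unit is false: for example $\phi(z)=z^3-3pz^2$ has good reduction at $p$, the point $z=0$ is critical with $e=2$, yet the coefficient of $z^2$ in $\phi(z)-\phi(0)$ is $-3p$. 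Fortunately you do not need this. After placing $Q_k$ at $[0:1]$ and writing $P_k=[x:1]$ with $v_\pid(x)=n>0$, good reduction gives $\min\{v_\pid(a_d),v_\pid(b_d)\}=0$, and the criticality hypothesis says
\[
b_dF(x,1)-a_dG(x,1)=\sum_{i\ge e}c_i x^i,\qquad c_i\in R_\pid.
\]
Then $\dpid(\phi(P_k),\phi(Q_k))=v_\pid\bigl(\sum_{i\ge e}c_ix^i\bigr)\ge en\ge 2n>n$ regardless of whether $c_e$ is a unit, which is precisely the strict inequality you need. With these two fixes your proof is complete.
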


We show how the two propositions together with Three-Point Lemma A can be used to prove Theorem~\ref{Thm:RefineTroncoso}.

\begin{proof}[Proof of Theorem~\ref{Thm:RefineTroncoso}] \quad\\
\begin{enumerate}
\item Assume there exist three $K$-rational points $R_1,R_2,R_3$ such each point is either a tail point or belongs to a critical cycle of $\phi$. Then by Propositions~\ref{troncoso} and \ref{cv-crit} we get that for any periodic point $P\in\Per(\phi,K)\setminus\{R_1,R_2,R_3\}$ we have $$\forall \pid \notin S, \quad \forall 1\leq{i}\leq{3}, \quad \dpid(P,R_i)=0.$$ We can therefore apply Three-Point Lemma A to obtain the required bound.
\item Assume there exist four $K$-rational periodic points of $\phi$ which we denote by $P_1$, $P_2$, $P_3$ and $P_4$. Let $R\in{\Tail(\phi,K)\cup\Per_0(\phi,K)}$. If $R$ is a tail point there can exist at most one periodic point $P$ such that $\phi^{mn}(R)=P$ for some $m$, where $n$ is the period of $P$; therefore at least three of $P_1,P_2,P_3,P_4$ do not satisfy this property, and we can assume without loss of generality that $P_1,P_2,P_3$ do not satisfy this property. If $R$ is a periodic point, we can assume again without loss of generality that it is distinct from $P_1,P_2,P_3$. By Propositions~\ref{troncoso} and \ref{cv-crit} we get that $$\forall \pid \notin S, \quad \forall 1\leq{i}\leq{3}, \quad \dpid(P_i,R)=0.$$ We can therefore apply Three-Point Lemma A (four times, depending on which three points of $P_1,P_2,P_3,P_4$ we choose) to obtain the required bound.
\end{enumerate}
\end{proof}

\section{The $n$-point lemmas}

We would like to write every point in $S$-coprime coordinates. However, $R_S$ is generally not a principal ideal domain and thus there exist points in $\p_1(K)$ that do not have $S$-coprime coordinates. To avoid this problem we use the same argument as in \cite{C2007}. For the reader's convenience we write this argument below.

Let  $\mathbf{a_1},...,\mathbf{a_h}$ be a full system of integral representatives for the ideal classes of $R_S$. Hence, for each $i \in \{1,..,h\}$ there is an $S$-integer $\alpha_i \in R_S$ such that
$$ \mathbf{a_i}^h = \alpha_i R_S .$$
Let $L$ be the extension of $K$ given by
$$L=K(\zeta,\sqrt[h] \alpha_1 ,...,\sqrt[h] \alpha_h)$$
where $\zeta$ is a primitive $h$-th root of unity and $\sqrt[h] \alpha_i$ is a chosen $h$-th root of $\alpha_i$.

We denote by $\sqrt{R_S^{*}}$, $\sqrt{R_S}$ and $\sqrt{K}$ the radicals in $L^{*}$ of $R_S^{*}$, $R_S$ and $K$ respectively. Denote by $\mathbb{S}$ the set of places of $L$ lying above the places in $S$ and by $R_\mathbb{S}$ and $R_\mathbb{S}^{*}$ the ring of $\mathbb{S}$-integers and the group of $\mathbb{S}$-units, respectively in $L$. By definition $R_\mathbb{S}^{*} \cap \sqrt{K^{*}}= \sqrt{R_S^{*}}$ and $\sqrt{R_S^{*}}$ is a subgroup of $L^{*}$ of free rank $s-1$ by Dirichlet's unit theorem.

\begin{lemma} \label{repre}
Assume  the notation above. There exist fixed representations $[x_P:y_P] \in \p_1(L)$ for every rational point $P \in \PP^1(K)$ satisfying the following two conditions.
\begin{enumerate}

\item [(a)] For every $P\in\p_1(K)$, we have $x_P,y_P \in \sqrt {K
^{*}}$ and
$$ x_P R_\mathbb{S}+y_P R_\mathbb{S}=R_\mathbb{S}.$$

\item [(b)] If $P,Q\in\PP^1(K)$ then
$$ x_Py_Q-y_Px_Q \in \sqrt{K^{*}} .$$

\end{enumerate}
\end{lemma}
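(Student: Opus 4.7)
The plan is to build the coordinates $(x_P,y_P)$ by scaling any naive $S$-integer representation $[a:b]$ of $P$ by a well-chosen denominator $\beta_P \in L^{\times}$ that trivialises the ideal class of $aR_S+bR_S$ after extending scalars to $R_\mathbb{S}$. The field $L$ was enlarged precisely so that every ideal class of $R_S$ becomes principal in $R_\mathbb{S}$ with a generator given explicitly by an $h$-th root of an element of $K^{\times}$, and that is the only structural input I will need.

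Concretely, the steps are as follows. First, for each $P \in \PP^1(K)$ fix some $a,b \in R_S$ with $P=[a:b]$, set $I_P := aR_S+bR_S$, and let $i(P) \in \{1,\dots,h\}$ be the unique index with $[I_P] = [\mathbf{a_{i(P)}}]$ in the class group of $R_S$; by definition of the class relation there exists $\gamma_P \in K^{\times}$ with $I_P = \gamma_P\, \mathbf{a_{i(P)}}$. Second, I would establish the key identity
\[ \mathbf{a_i}\, R_\mathbb{S} \;=\; \sqrt[h]{\alpha_i}\, R_\mathbb{S} \qquad \text{inside } L, \]
for each $i\in\{1,\dots,h\}$: both sides are fractional ideals of the Dedekind domain $R_\mathbb{S}$ whose $h$-th powers equal $\alpha_i R_\mathbb{S}$, so unique factorisation of ideals in a Dedekind domain forces equality. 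Third, set
\[ \beta_P := \gamma_P\, \sqrt[h]{\alpha_{i(P)}}, \qquad x_P := a/\beta_P, \qquad y_P := b/\beta_P. \]
Since $\beta_P^h = \gamma_P^h \alpha_{i(P)} \in K^{\times}$, automatically $\beta_P \in \sqrt{K^{\times}}$, and by construction $[x_P:y_P] = [a:b] = P$ in $\PP^1(L)$.

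The verifications of (a) and (b) then reduce to ideal bookkeeping. For (a), dividing the chain $aR_\mathbb{S} + bR_\mathbb{S} = I_P R_\mathbb{S} = \gamma_P \mathbf{a_{i(P)}} R_\mathbb{S} = \beta_P R_\mathbb{S}$ through by $\beta_P$ yields $x_P R_\mathbb{S} + y_P R_\mathbb{S} = R_\mathbb{S}$, and the membership $x_P,y_P \in \sqrt{K^{\times}}$ (whenever nonzero) is immediate from $x_P^h = a^h/\beta_P^h \in K^{\times}$ and similarly for $y_P$. For (b), writing $Q = [a':b']$ one computes
\[ x_P y_Q - y_P x_Q \;=\; \frac{ab' - a'b}{\beta_P\,\beta_Q}, \]
and raising to the $h$-th power places the result in $K^{\times}$ as soon as $P \ne Q$, which is exactly the condition for membership in $\sqrt{K^{\times}}$. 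The only real obstacle in the whole argument is the identity $\mathbf{a_i} R_\mathbb{S} = \sqrt[h]{\alpha_i}\, R_\mathbb{S}$, which is precisely what forced the adjunction of the $\sqrt[h]{\alpha_i}$ to form $L$; everything after that step is a mechanical unpacking of the definitions.
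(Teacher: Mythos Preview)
Your proof is correct and follows essentially the same approach as the paper: both scale an arbitrary $R_S$-representation of $P$ by an element of the form $\gamma_P\sqrt[h]{\alpha_{i(P)}}$ (the paper writes this as $\lambda\sqrt[h]{\beta}$), using that $(aR_S+bR_S)^h=\gamma_P^h\alpha_{i(P)}R_S$. Your write-up is in fact slightly more explicit than the paper's, since you justify the identity $\mathbf{a_i}R_\mathbb{S}=\sqrt[h]{\alpha_i}\,R_\mathbb{S}$ via unique factorisation in the Dedekind domain $R_\mathbb{S}$, whereas the paper simply asserts that $x'R_\mathbb{S}+y'R_\mathbb{S}=R_\mathbb{S}$ is ``clear''.
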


\begin{proof}
Let $P=[x:y]$ be a representation of $P$ in $\p_1(K)$ and consider  $\mathbf{b} \in \{ \mathbf{a_1},...,\mathbf{a_h} \}$ a representative of $x R_S+y R_S$. We can find   $\beta \in K^{*}$ such that $\mathbf{b}^h =\beta R_S$. Then there is $\lambda  \in K^{*}$ such that
\begin{equation} \label{Lemma.R}
(xR_S +yR_S)^h = \lambda^h\beta R_S.
\end{equation}

We define in $L$
$$ x'=\frac{x}{\lambda \sqrt[h]\beta}  \quad\quad\quad y'=\frac{y}{\lambda \sqrt[h]\beta}$$
and with this definition, it is clear that $x',y' \in \sqrt {K^{*}}$ such that $x' R_\mathbb{S}+y' R_\mathbb{S}=R_\mathbb{S}$.
\\ Furthermore, let $P=[x_1',y_1']$ and $Q=[x_2':y_2']$  where
$$ x_i'=\frac{x_i}{\lambda_i \sqrt[h]\beta_i}  \quad\quad\quad y_i'=\frac{y_i}{\lambda_i \sqrt[h]\beta_i}$$
and $\lambda_i ,\beta_i$ are as the ones described in equation (\ref{Lemma.R}) for $i\in \{1,2\}$. Then
$$(x_1'y_2'-y_1'x_2')^h = \frac{(x_1y_2-y_1x_2)^h}{\lambda_1^h \lambda_2^h \beta_1\beta_2} \in K^{*}. $$
\end{proof}

\begin{definition}
A point $P\in\p_1(K)$ written as in Lemma \ref{repre} is said to be written in \emph{$S$--radical coprime coordinates}.
\end{definition} 
 
For the rest of this section we assume the above notation for $L$, $\mathbb{S}$, $\sqrt{K}$, $\sqrt{R_S^{*}}$, and $\sqrt{R_S}$.

%

Before we prove Three-Point Lemma B, Four-Point Lemma B and Three-Point Lemma C, we need to recall some results on the $S$-unit equation.  Below we cite a result from Beukers and Schlickewei that gives a bound on the number of solutions of the $S$-unit equation in two variables where these solutions lie in a multiplicative subgroup.

\begin{theorem} [Beukers and Schlickewei  \cite{BS1996}]
Let $K$ be a number field and $\Gamma$ be a subgroup of $(K^{*})^2=K^{*}\times K^{*}$ of rank $r$. Then the equation
$$x+y=1$$
has at most $2^{8(r+1)}$ solutions with $(x,y) \in \Gamma$.
\end{theorem}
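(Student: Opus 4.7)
The plan is to follow the hypergeometric / Padé approximation approach of Beukers and Schlickewei, which improves earlier Thue--Mahler-type estimates by making the bound depend only on the rank $r$ of $\Gamma$ and not on any auxiliary set of places.

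First, after replacing $\Gamma$ by a saturated subgroup of the same rank and passing to a finite extension if necessary, I would fix a $\ZZ$-basis $g_1,\ldots,g_r$ of $\Gamma/\Gamma_{\mathrm{tor}}$. Every solution $(x,y)\in\Gamma$ of $x+y=1$ then corresponds to a torsion coset together with an integer exponent vector $(a_1,\ldots,a_r)$. Fixing an archimedean embedding $\sigma\colon K\hookrightarrow\CC$ gives each solution a logarithmic coordinate vector in $\RR^r$ via the $\log|\sigma(g_i)|$. The first step is then a covering / pigeonhole argument: partition $\RR^r$ into $2^{O(r)}$ convex cones, chosen so that within each cone the relative sizes of $|\sigma(x)|$ and $|\sigma(y)|$ and the sign pattern of the $a_i$ are fixed.

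The second, technical step is the hypergeometric gap principle. For each $n\ge 1$ and each rational $s$ in an appropriate set, one constructs explicit Padé approximants $A_n,B_n\in \ZZ[z]$ of degree $n$ such that $A_n(z)-B_n(z)(1-z)^s$ vanishes to order at least $2n+1$ at $z=0$, with coefficients whose archimedean and non-archimedean sizes are tightly controlled. Evaluating this identity at the point $z=1-y/x$ for two distinct solutions $(x_1,y_1),(x_2,y_2)$ lying in the same cone, and applying the product formula to the resulting algebraic number, yields an inequality that can only be satisfied if the two solutions coincide. Hence each cone contributes at most an absolute constant number of solutions.

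Combining the exponential number of cones with the per-cone absolute bound produces an estimate of the shape $C\cdot 2^{cr}$, and the precise exponent $2^{8(r+1)}$ is then obtained by optimizing $n$ in the Padé construction and carefully bookkeeping the torsion contribution. The main obstacle is the second step: producing polynomials $A_n,B_n$ strong enough to beat the $r$-dependence, yet small enough that the product formula forces equality rather than merely a bound. This hypergeometric non-vanishing argument is the technical heart of Beukers--Schlickewei, and it is what permits the bound to be independent of both the underlying number field $K$ and any set of places $S$ defining $\Gamma$.
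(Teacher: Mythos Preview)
The paper does not prove this theorem at all: it is quoted verbatim from \cite{BS1996} and used as a black box to derive Corollary~\ref{S-unit} and the constant $B(|S|)$. There is therefore no ``paper's own proof'' to compare against; the authors simply cite the result and move on.

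That said, your sketch is a fair high-level outline of the actual Beukers--Schlickewei argument: the reduction to a free-rank-$r$ problem, a cone decomposition in logarithmic coordinates contributing a factor exponential in $r$, and within each cone a hypergeometric Pad\'e construction (approximants to $(1-z)^\nu$) that forces any two nearby solutions to coincide via the product formula. The description of the second step is quite vague --- in particular, the role of the parameter $s$, the precise non-vanishing needed, and how one handles the torsion part of $\Gamma$ would all need substantial work to turn this into a proof --- but as a plan it points at the right mechanism. Just be aware that for the purposes of this paper none of this is required: the theorem is an input, not a target.
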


\begin{corollary} \label{S-unit}
Let $K$ be a number field and $\Gamma_0$ be a subgroup of $K^{*}$ of rank $r$. Consider $\Gamma=\Gamma_0 \times \Gamma_0$ and assume $a,b \in K^{*}$. Then the equation
$$ax+by=1 $$
has at most $2^{8(2r+2)}$ solutions with $(x,y) \in \Gamma$.
\end{corollary}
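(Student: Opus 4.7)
The plan is to reduce the equation $ax+by=1$ to the standard $S$-unit equation $X+Y=1$ by a linear substitution, and then apply the Beukers--Schlickewei theorem to an appropriate subgroup of $(K^*)^2$ that is only one rank larger than $\Gamma$.

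First I would set $X=ax$ and $Y=by$, turning the equation into $X+Y=1$. The pair $(X,Y)$ then lies in the set $\{(ax,by)\mid (x,y)\in\Gamma\}$, which is exactly the coset $(a,b)\cdot \Gamma$ of $\Gamma=\Gamma_0\times\Gamma_0$ inside $(K^*)^2$. The substitution is clearly injective, so counting solutions of the original equation reduces to counting solutions of $X+Y=1$ with $(X,Y)$ lying in this coset.

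Next I would enlarge the coset to a subgroup by setting $\Gamma'=\langle(a,b)\rangle\cdot\Gamma$. Since $\Gamma_0$ has rank $r$ as a subgroup of $K^*$, the product $\Gamma=\Gamma_0\times\Gamma_0$ has rank $2r$ as a subgroup of $(K^*)^2$, and adjoining the single element $(a,b)$ can increase the rank by at most one. Hence $\Gamma'$ has rank at most $2r+1$, and it contains the coset $(a,b)\cdot\Gamma$ into which the substituted solutions fall.

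Finally I would apply the Beukers--Schlickewei theorem to the subgroup $\Gamma'$, obtaining at most $2^{8((2r+1)+1)}=2^{8(2r+2)}$ solutions $(X,Y)\in\Gamma'$ of $X+Y=1$. In particular this bounds the number of $(X,Y)$ in the smaller coset $(a,b)\cdot\Gamma$, and hence the number of solutions $(x,y)\in\Gamma$ of $ax+by=1$. The only point that requires care is the rank estimate for $\Gamma'$, and this is immediate from the fact that we are adjoining a single element to a group of rank $2r$; beyond that the argument is a direct application of the cited theorem, so I do not foresee a serious obstacle.
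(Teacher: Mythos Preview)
Your proposal is correct and is precisely the standard reduction the paper has in mind: the corollary is stated without proof there, as an immediate consequence of the Beukers--Schlickewei theorem, and your substitution $X=ax$, $Y=by$ together with the rank estimate $\operatorname{rank}(\langle(a,b)\rangle\cdot\Gamma)\le 2r+1$ is exactly the intended one-line derivation.
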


Next we quote a result from Evertse, Schlickewei and Schmidt that gives a bound on the number of solutions of the $S$-unit equation in three or more variables where these solutions lie in a multiplicative subgroup.

\begin{theorem} [Evertse, Schlickewei and Schmidt \cite{ES2002}] \label{S-unit:more variables} 
Let $K$ be a number field and $\Gamma$ be a subgroup of $(K^{*})^n$ of rank $r$. Assume $a_1,\ldots,a_n \in (K^{*})^n$. Then the equation
$$a_1x_1+\ldots+a_nx_n=1$$
has at most $e^{(6n)^{3n}(r+1)}$ solutions with $(x_1,\ldots,x_n) \in \Gamma$ and $\displaystyle \sum_{i\in I} a_ix_i \neq 0$ for every nonempty subset $I$ of $\{1,\ldots,n\}$.
\end{theorem}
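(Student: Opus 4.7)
The statement at hand is quoted verbatim from Evertse–Schlickewei–Schmidt \cite{ES2002}, so in the paper itself the ``proof'' would consist of citing \cite{ES2002}. Still, a sensible proof plan, which is what I would sketch if asked to reconstruct the argument, goes as follows.

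The plan is to proceed by induction on $n$. The case $n=1$ is trivial (at most one solution of $a_1x_1=1$), and the case $n=2$ is exactly the Beukers–Schlickewei bound stated immediately above; in fact one can bootstrap from that bound by a convenient choice of constants. For the inductive step, given a non-degenerate solution $\mathbf{x}=(x_1,\ldots,x_n)\in\Gamma$, I would partition the solution set into a ``small'' part and a ``large'' part with respect to a suitably normalized twisted height $H$ on $\Gamma$ that records both the absolute values of the coordinates at all places (archimedean and at places controlling $\Gamma$) and the coefficients $a_1,\ldots,a_n$. A threshold $H(\mathbf{x})\le C_0$ versus $H(\mathbf{x})>C_0$ is chosen, with $C_0$ depending only on $n$ and $r$.

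For the small solutions, I would run a direct lattice-point count in a fundamental domain of $\Gamma$ modulo torsion. Because $\Gamma$ has rank $r$, the number of lattice points of bounded height is bounded by a quantity of the form $C_1(n)^{r+1}$. For the large solutions, the main tool is the Quantitative Subspace Theorem of Schmidt–Schlickewei–Evertse: the projective points $[x_1:\dots:x_n]\in\PP^{n-1}$ coming from large non-degenerate solutions lie in a finite union of proper linear subspaces of $\PP^{n-1}$, the number of which is bounded by $C_2(n)^{r+1}$, independently of the coefficients $a_i$. On each such proper subspace one can write some $x_j$ as a linear combination of the other $x_i$'s, and substituting back transforms the equation into a similar $S$-unit equation in at most $n-1$ variables. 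The non-vanishing subsum condition $\sum_{i\in I}a_ix_i\neq 0$ is essential here: it guarantees that after the reduction the smaller equation still has no vanishing subsum, so the inductive hypothesis applies, and no collapse or double-counting occurs.

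The main obstacle—and the entire reason the bound is so large—is establishing the Quantitative Subspace Theorem with an explicit bound on the number of subspaces depending only on $n$ and $r$. This is a deep result proved via Schmidt's geometry of numbers in the adelic setting, together with intricate combinatorial bookkeeping, and is itself a cornerstone of modern Diophantine geometry. The stated double-exponential shape $e^{(6n)^{3n}(r+1)}$ reflects the iterative use of that theorem through the induction on $n$, with the constants accumulating at each step. For our purposes in this article, I would simply invoke the theorem of \cite{ES2002} as a black box, as the authors do, and use it to control the number of solutions of the $n$-variable $S$-unit equations that arise later in the proofs of the $n$-point lemmas.
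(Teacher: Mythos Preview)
Your proposal is correct: the paper does not prove this theorem at all but simply quotes it from \cite{ES2002} as a black box, exactly as you say in your first sentence. Your additional sketch of the Evertse--Schlickewei--Schmidt argument (induction on $n$, small solutions via lattice-point counting, large solutions via the Quantitative Subspace Theorem) is a faithful outline of the original proof in \cite{ES2002}, but none of it appears in the present paper, which only uses the result as input for the $n$-point lemmas.
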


In general, we will use Corollary \ref{S-unit} and Theorem \ref{S-unit:more variables} with $\Gamma=\rsu\times \rsu$ and $\Gamma=(\rsu)^n$ respectively. The bounds in each case will be $2^{8(2|S|)}$ and $e^{(6n)^{3n}(n|S|+1-n)}$ and they will be denoted by $B(|S|)$ and $C(n,|S|)$, respectively. In what follows we will prove Three-Point Lemma B, Four-Point Lemma B and Three-Point Lemma C.

\begin{lemma}[Three-Point Lemma B] Let $K$ be a number field and $S$ a finite set of places of $K$ containing all the archimedean ones. Let $Q_1,Q_2,Q_3$ be three distinct points in $\p_1(K)$. Given a fixed choice of nonnegative integers $n_{i,\pid}$ for each $i\in\{1,2,3\}$ and each $\pid\notin S$, the set 
\beq\label{3PfD}\{P\in\p_1(K)\mid \dpid(P,Q_i)=n_{i,\pid}\ \text{for each $i\in\{1,2,3\}$ and $\pid\notin S$}\}\eeq
 has cardinality bounded by $B(|S|)$.
\end{lemma}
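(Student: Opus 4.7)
The plan is to reduce to an $S$-unit equation in two variables, following the classical strategy. First I would use Lemma~\ref{repre} to write every point in $S$-radical coprime coordinates over the field $L$: write $Q_i = [a_i : b_i]$ for $i = 1, 2, 3$ and $P = [x : y]$ with all entries in $\sqrt{K^{*}}$ and with the $\mathbb{S}$-coprimality property. By Remark~\ref{Remark2.5}, for any prime $\mathfrak{p}$ lying above a prime of $K$ outside $S$ we then have
\[
\delta_{\mathfrak p}(P, Q_i) \;=\; v_{\mathfrak p}(L_i(x,y)), \qquad L_i(x,y) := x\, b_i - y\, a_i.
\]
Hence the hypothesis fixes the $\mathfrak{p}$-adic valuation of each $L_i(x,y)$ outside $\mathbb{S}$.

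Next, since the $Q_i$ are pairwise distinct, the forms $L_1, L_2$ are linearly independent, so I can write $L_3 = \alpha L_1 + \beta L_2$ with $\alpha, \beta \in K^{*}$; the fact that $Q_3 \neq Q_1, Q_2$ forces both $\alpha$ and $\beta$ to be nonzero. Dividing the relation by $L_3(x,y)$ yields the key identity
\[
\alpha\, u \;+\; \beta\, v \;=\; 1, \qquad u := \frac{L_1(x,y)}{L_3(x,y)}, \quad v := \frac{L_2(x,y)}{L_3(x,y)}.
\]
Both $u$ and $v$ lie in $\sqrt{K^{*}}$ and, by the valuation constraint on the $L_i$, their valuations outside $\mathbb{S}$ are completely determined by the fixed integers $n_{i,\mathfrak{p}}$: $v_{\mathfrak p}(u) = n_{1,\mathfrak p} - n_{3,\mathfrak p}$ and $v_{\mathfrak p}(v) = n_{2,\mathfrak p} - n_{3,\mathfrak p}$.

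I would then fix one reference solution $(u_0, v_0)$ (if the solution set is empty or a singleton there is nothing to prove), so any other solution is of the form $(u_0 u', v_0 v')$ with $u', v' \in R_{\mathbb S}^{*} \cap \sqrt{K^{*}} = \sqrt{R_S^{*}}$. Substituting gives
\[
(\alpha u_0)\, u' \;+\; (\beta v_0)\, v' \;=\; 1, \qquad (u', v') \in \sqrt{R_S^{*}} \times \sqrt{R_S^{*}},
\]
which is an $S$-unit equation in two variables over the group $\Gamma = \Gamma_0 \times \Gamma_0$ with $\Gamma_0 = \sqrt{R_S^{*}}$. Since $\sqrt{R_S^{*}}$ has rank $|S| - 1$ by Dirichlet, Corollary~\ref{S-unit} yields at most $2^{8(2(|S|-1)+2)} = 2^{16|S|} = B(|S|)$ such pairs $(u', v')$.

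It remains to check that each pair $(u, v)$ gives at most one point $P$. The map $P \mapsto u = L_1/L_3$ is a Möbius function on $\mathbb{P}_1$ (well-defined because $Q_1 \neq Q_3$ makes the linear forms $L_1 - u L_3$ nonzero for generic $u$, and more carefully, this function is a degree-one rational map, hence injective), so $u$ determines $P$ uniquely. This gives the desired bound by $B(|S|)$. The main potential obstacle is the bookkeeping required to transfer the problem from $K$ to $L$ and to control the rank of $\sqrt{R_S^{*}}$ correctly so that the bound comes out as $B(|S|)$ rather than depending on the class number or on $[L:K]$; but Lemma~\ref{repre} was designed precisely to make this transition clean, and the rank of a radical extension of $R_S^{*}$ matches the rank of $R_S^{*}$ itself.
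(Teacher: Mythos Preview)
Your proof is correct and follows essentially the same route as the paper: write everything in $S$-radical coprime coordinates via Lemma~\ref{repre}, use the linear dependence among the three forms $L_i$ to obtain a two-variable $S$-unit equation over $\sqrt{R_S^*}$, and apply Corollary~\ref{S-unit} to get the bound $B(|S|)=2^{16|S|}$. The only cosmetic difference is that the paper fixes elements $C_i\in\sqrt{K^*}$ with prescribed valuations $n_{i,\mathfrak p}$ and writes $a_iy-b_ix=u_iC_i$ with $u_i\in\sqrt{R_S^*}$, whereas you achieve the same normalization by fixing a reference solution $(u_0,v_0)$; both reductions land on the same unit equation.
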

\begin{proof}
The set in \eqref{3PfD} is empty if the set of nonzero numbers $n_{i,\pid}$ is infinite. Otherwise there are three elements $C_1,C_2,C_3\in \sqrt{K^{*}}$ 
such that 
$$v_{\pid^{\prime}}(C_i)=n_{i,\pid^{\prime}}$$
for each $i\in\{1,2,3\}$ and every $\pid^{\prime}\notin \Sc$. Assume $Q_i=[a_i:b_i]$ to be written in $S$--radical coprime coordinates for each $i\in\{1,2,3\}$. A point $P=[x:y]$, written in $S$--radical coprime coordinates too, belongs to the set in \eqref{3PfD} if and only if there exist three units $u_1,u_2,u_3\in\rsu$ verifying the three following conditions
$$a_iy-b_ix=u_iC_i$$
for each $i\in\{1,2,3\}$. The three units $u_1,u_2,u_3$ verify the following equation
$$(a_3b_2 - b_3a_2)C_1u_1 + (b_3a_1 - a_3b_1)C_2u_2 = (b_2a_1 - b_1a_2)C_3u_3.$$
Now the proof is a trivial application of Beukers and Schlickewei's result (see Corollary \ref{S-unit}) and algebraic manipulations.
\end{proof}

\begin{proposition}[Four-Point Lemma B]\label{pro:4points}
Let $K$ be a number field and $S$ a finite set of places of $K$ containing all the archimedean ones. Let $Q_1,Q_2,Q_3, Q_4$ be four distinct points in $\p_1(K)$. The set 
$$\mathcal{F}=\left\{P\in \p_1(K)\mid \dmfp(P,Q_1)=\dmfp(P,Q_2), \dmfp(P,Q_3)=\dmfp(P,Q_4), \forall\mfp\notin S\right\}$$
is finite with cardinality bounded by $C(3,|S|)+2$.
\end{proposition}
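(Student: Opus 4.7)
The strategy mirrors the proof of Three-Point Lemma B. I write $P = [x{:}y]$ and each $Q_i = [a_i{:}b_i]$ in $S$--radical coprime coordinates via Lemma~\ref{repre}, and observe that any $P \in \mathcal{F}$ must satisfy $P \neq Q_i$ for each $i$: otherwise some $\delta_\mfp(P,Q_i)$ is infinite at every prime, and the matching condition would force $P$ to coincide with a second, distinct $Q_j$. Hence the linear forms $L_i := b_i x - a_i y$ are all nonzero, and (after extending to the primes of $L$ above $\mfp$, exactly as for Three-Point Lemma B) the identity $\delta_\mfp(P, Q_i) = v_\mfp(L_i)$ holds for every $\mfp \notin S$. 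The two hypotheses of the lemma translate directly into
$$u := L_1/L_2 \in \sqrt{R_S^*}, \qquad w := L_3/L_4 \in \sqrt{R_S^*}.$$

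Since $L_1, L_2$ are linearly independent, expansion in this basis gives
$$L_3 = -\tfrac{c_{23}}{c_{12}} L_1 + \tfrac{c_{13}}{c_{12}} L_2, \qquad L_4 = -\tfrac{c_{24}}{c_{12}} L_1 + \tfrac{c_{14}}{c_{12}} L_2,$$
where $c_{ij} := a_i b_j - a_j b_i$. Substituting $L_3 = w L_4$, dividing by $L_2$, and normalising by $c_{13}$ produces the three-term $S$-unit equation
$$A u + B w + C u w = 1, \qquad A = \tfrac{c_{23}}{c_{13}}, \ B = \tfrac{c_{14}}{c_{13}}, \ C = -\tfrac{c_{24}}{c_{13}},$$
whose coefficients are all nonzero by distinctness of the $Q_i$. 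Note that $u$ alone determines $P$, as the unique projective point where $L_1 - u L_2$ vanishes; so the map sending a solution $(u,w)$ to $P$ is injective. Applying the Evertse--Schlickewei--Schmidt theorem (Theorem~\ref{S-unit:more variables}) to the tuple $(u, w, uw) \in (\sqrt{R_S^*})^3$ bounds the number of \emph{non-degenerate} solutions, i.e.\ those with no proper subsum of $\{Au, Bw, Cuw\}$ equal to zero, by $C(3, |S|)$.

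It remains to count degenerate solutions. Singleton subsums cannot vanish since $A, B, C, u, w$ are all nonzero. Of the three two-element subsum cases, the case $Au + Cuw = 0$ combined with $Bw = 1$ forces $-A/C = w = 1/B$, hence $C = -AB$; expanding, this is equivalent to $c_{13} c_{24} - c_{14} c_{23} = 0$. A direct calculation yields the identity
$$c_{13} c_{24} - c_{14} c_{23} = c_{12} c_{34},$$
which is nonzero since all four $Q_i$ are distinct, so this case (and symmetrically $Bw + Cuw = 0$) contributes nothing. The surviving case $Au + Bw = 0$ with $Cuw = 1$ reduces to $u^2 = c_{13} c_{14}/(c_{23} c_{24}) \in \sqrt{R_S^*}$, admitting at most two square roots in $\sqrt{K^*}$ and hence at most two exceptional points. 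Summing yields $|\mathcal{F}| \leq C(3,|S|) + 2$. The main obstacle is simply making the degenerate-case bookkeeping tight enough to produce exactly the claimed ``$+2$''; it is the cross-product identity above that forces the other two degeneracies to collapse.
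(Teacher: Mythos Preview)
Your argument is correct and follows essentially the same route as the paper's proof: both reduce the two distance-matching conditions to a three-term $S$-unit equation of the shape $Au + Bw + Cuw = 1$ and then invoke Theorem~\ref{S-unit:more variables} for the non-degenerate solutions, handling the vanishing subsums separately to account for the ``$+2$''. The only cosmetic difference is that the paper first moves $Q_1$ to $[0:1]$ before parametrising $P$, whereas you work invariantly with the cross-ratios $c_{ij}$ and make the Pl\"ucker identity $c_{13}c_{24}-c_{14}c_{23}=c_{12}c_{34}$ explicit to rule out two of the three degenerate subsums; the paper dismisses those same cases with the one-line remark that they contradict distinctness of the $Q_i$.
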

\begin{proof}
We assume $Q_i=[x_i:y_i]$ written in $S$--radical coprime coordinates for each $i\in\{1,2,3\}$. Without loss of generality we may assume $Q_1=[0:1]$ (up to a $\rs$--invertible change of coordinates of $\p_1$).  Let $P\in \mathcal{F}$ and we assume that $P=[x:y]$ is written in $S$--radical coprime coordinates too. The condition $\dmfp(P,Q_1)=\dmfp(P,Q_2)$ is equivalent to the existence of a unit $u\in\rsu$ such that $x=u(xy_2-yx_2)$, that is equivalent to 
$$y=\frac{x}{ux_2}(uy_2-1).$$
Therefore $P=[ux_2: uy_2-1]$. 
Since 
\[\dmfp(P,Q_3)=v_\mfp(ux_2y_3-(uy_2-1)x_3)-\min\{v_\mfp(ux_2), v_\mfp(uy_2-1)\}\] and 
\[\dmfp(P,Q_4)=v_\mfp(ux_2y_4-(uy_2-1)x_4)-\min\{v_\mfp(ux_2), v_\mfp(uy_2-1)\}\] are equal, there exists a unit $v\in\rsu$ such that 
$$ux_2y_3-(uy_2-1)x_3=v((ux_2)y_4-(uy_2-1)x_4).$$
Therefore the two units $u,v$ have to verify the following equation
\beq\label{pro:eq4p:v2}Au+Bv+Cuv=1\eeq
where $\displaystyle A=\frac{x_2y_3-x_3y_2}{-x_3}$, $\displaystyle B=\frac{x_4}{x_3}$ and $\displaystyle C=\frac{x_4y_2-x_2y_4}{-x_3}$.

We consider all possible vanishing subsums. We see that the cases $Au=0$, $Bv=0$, $Cuv=0$,$Au+Cuv=0$ and $Bv+Cuv=0$ are all impossible because the points $Q_i$ are distinct and the case $Au+Bv=0$ provides two solutions for $u$. Hence $u$ assume at most $C(3,|S|)+2$ possibilities and thus the cardinality of $\mathcal{F}$ is bounded by $C(3,|S|)+2$.

\end{proof}

\begin{proposition}[Three-Point Lemma C]\label{pro:3points}
Let $K$ be a number field and $S$ a finite set of places of $K$ containing all the archimedean ones. Let $Q_1,Q_2,Q_3$ be three distinct points in $\p_1(K)$. The set 
$$\mathcal{T}=\left\{P\in \p_1(K)\mid \dmfp(P,Q_1)=\dmfp(P,Q_2)=\dmfp(P,Q_3), \mfp\notin S\right\}$$
is finite with cardinality bounded by $B(|S|)$. 
\end{proposition}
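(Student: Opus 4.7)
The plan is to proceed in close analogy with the proof of Proposition~\ref{pro:4points} (Four-Point Lemma B), observing that the elimination process now leads to a two-variable $S$-unit equation (rather than a three-variable one), so that Corollary~\ref{S-unit} (Beukers--Schlickewei) applies directly and yields the sharper bound $B(|S|)$.

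First I would write each $Q_i=[x_i:y_i]$ in $S$-radical coprime coordinates and, up to an $\rs$-invertible change of coordinates on $\p_1$, assume $Q_1=[0:1]$. For any $P=[x:y]\in\mathcal{T}$, also written in $S$-radical coprime form, the equality $\dmfp(P,Q_1)=\dmfp(P,Q_2)$ for every $\mfp\notin\Sc$ is equivalent (exactly as in the Four-Point Lemma B) to the existence of a unit $u\in\rsu$ with $x=u(xy_2-yx_2)$, so that $P=[ux_2:uy_2-1]$. Analogously, the equality $\dmfp(P,Q_1)=\dmfp(P,Q_3)$ yields $v\in\rsu$ with $x=v(xy_3-yx_3)$. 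Eliminating the ratio $y/x$ between the resulting identities $y/x=(y_2-1/u)/x_2=(y_3-1/v)/x_3$ produces
$$\frac{x_2}{D}\cdot\frac{1}{v}\;-\;\frac{x_3}{D}\cdot\frac{1}{u}\;=\;1,$$
where $D:=x_2y_3-x_3y_2\ne 0$ because $Q_2\ne Q_3$, and $x_2,x_3\ne 0$ because $Q_2,Q_3\ne Q_1=[0:1]$.

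This is a two-variable $S$-unit equation in the unknowns $(1/u,1/v)\in\rsu\times\rsu$, so Corollary~\ref{S-unit} bounds the number of solution pairs $(u,v)$ by $B(|S|)$. Since distinct values of $u$ produce distinct points $P=[ux_2:uy_2-1]$ (a direct check from the projective identification), the cardinality of $\mathcal{T}$ is bounded by the number of pairs $(u,v)$, giving $|\mathcal{T}|\le B(|S|)$. The only routine verification—mirroring the one in the four-point case and the only real obstacle, if any—is that the passage between equalities of $\mfp$-adic distances and multiplicative identities in $\sqrt{R_S^*}$ is valid; this uses Lemma~\ref{repre}(b) to place the quantities $xy_i-yx_i$ in $\sqrt{K^*}$, after which the vanishing of their $\mfp$-adic valuations outside $\Sc$ forces them into $\rsu$. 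No new machinery beyond what was already used for Four-Point Lemma B is needed.
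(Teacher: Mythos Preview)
Your proof is correct and arrives at the same two-variable $S$-unit equation (and hence the same Beukers--Schlickewei bound $B(|S|)$) as the paper. The only difference is presentational: the paper dispatches Three-Point Lemma C in one line by observing it is a particular case of Three-Point Lemma B (where the three quantities $a_iy-b_ix$ differ pairwise by units, so one may take $C_1=C_2=C_3$ and divide through by $u_3$), whereas you rederive the equation directly by normalizing $Q_1=[0:1]$ in the style of the Four-Point Lemma B proof. Both routes unwind to the same identity and the same application of Corollary~\ref{S-unit}.
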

\begin{proof} This is a particular case of the Three-Point Lemma B proven above.
\end{proof}


\section{Bounds on tail points}\label{Bounds on tail points}

The main objective of this section is to prove Theorem \ref{Thm:NFPrePer}. To do so, we will prove four lemmas to bound the number of tail points of a $K$-rational periodic point. First we provide a bound for $|{\rm Tail}(\phi,K, P)|$ when $P$ is a fixed $K$-rational point under the endomorphism $\phi$.

\begin{lemma}\label{lem:TailFix}
Let $K$ be a number field and $S$ a finite set of places of $K$ containing all the archimedean ones.  
Let $\phi$ be an endomorphism of $\PP^1$, defined over $K$, and $d\geq 2$ the degree of $\phi$. Assume $\phi$ has good reduction outside $S$. Let $P\in\p_1(K)$ be a fixed point of $\phi$. Then 
$$|{\rm Tail}(\phi,K, P)| \leq L_1(d,|S|)$$
where $L_1(d,|S|)=(d-1)(1+d(1+B(|S|))).$
\end{lemma}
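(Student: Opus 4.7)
The plan is to prove this bound by a two-level stratification of ${\rm Tail}(\phi, K, P)$ according to iterated preimages, combined with Three-Point Lemma C. Since $P$ is fixed, every tail point $R$ of $P$ satisfies $\phi^{\ell}(R) = P$ for its tail length $\ell \geq 1$, and $\phi^{\ell - 1}(R)$ is a $K$-rational preimage of $P$ distinct from $P$. I will set $T_1 := (\phi^{-1}(P) \setminus \{P\}) \cap \p_1(K)$, which has at most $d - 1$ elements since $\phi$ has degree $d$ and $P$ itself is one preimage. This gives the disjoint decomposition
\[ {\rm Tail}(\phi, K, P) = \bigsqcup_{Q \in T_1} T_*(Q), \qquad T_*(Q) := \{R \in \p_1(K) : \phi^k(R) = Q \text{ for some } k \geq 0\}, \]
the disjointness following from the fact that once the orbit of $R$ reaches $Q \in T_1$ it proceeds directly to $P$.

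Next I will refine each $T_*(Q)$ as $\{Q\} \sqcup \bigsqcup_{Q' \in T_2(Q)} T_*(Q')$, where $T_2(Q) := \phi^{-1}(Q) \cap \p_1(K)$ has at most $d$ elements (and does not contain $Q$, since $Q$ is a tail point and hence not fixed). It then suffices to bound $|T_*(Q')|$ by $1 + B(|S|)$ for each $Q' \in T_2(Q)$, which will yield $|T_*(Q)| \leq 1 + d(1 + B(|S|))$ and finally
\[ |{\rm Tail}(\phi, K, P)| \leq (d - 1)(1 + d(1 + B(|S|))) = L_1(d, |S|). \]

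The core step is the bound on $|T_*(Q')|$. For any $R \in T_*(Q') \setminus \{Q'\}$, let $k \geq 1$ be the smallest integer with $\phi^k(R) = Q'$; then $\phi^{k+1}(R) = Q$ and $\phi^{k+2}(R) = P$. I will apply Lemma~\ref{pab} to the fixed point $P$ twice: first with $(b, a) = (k+2, 1)$ and $P_a = Q$, and then with $(b, a) = (k+2, 2)$ and $P_a = Q'$. This produces
\[ \dpid(R, P) = \dpid(R, Q) = \dpid(R, Q') \qquad \text{for all } \pid \notin S. \]
The three points $P, Q, Q'$ are pairwise distinct ($P$ is periodic while $Q$ and $Q'$ are tail points, and $Q' \neq Q$ because $Q$ is not fixed), so Three-Point Lemma C (Proposition~\ref{pro:3points}) bounds the set of such $R$ by $B(|S|)$; including $Q'$ itself yields $|T_*(Q')| \leq 1 + B(|S|)$.

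The main obstacle will be verifying that both instances of Lemma~\ref{pab} apply correctly — in particular, that the chain $R \to Q' \to Q \to P$ produces three simultaneously forced equalities of $\pid$-adic distances — but once that is in hand, Three-Point Lemma C counts the resulting configuration cleanly and the multiplicative bookkeeping delivers $L_1(d,|S|)$.
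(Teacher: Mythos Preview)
Your proof is correct and follows essentially the same approach as the paper: stratify ${\rm Tail}(\phi,K,P)$ by the first two levels of non-periodic preimages of $P$, use Lemma~\ref{pab} on the chain $R\to Q'\to Q\to P$ to force $\dpid(R,P)=\dpid(R,Q)=\dpid(R,Q')$, and then apply Three-Point Lemma~C to bound the deeper tail by $B(|S|)$. Your notation and bookkeeping are a bit more explicit (e.g.\ the sets $T_*(Q)$ and the disjointness check), but the argument is the paper's, with your $Q,Q',R$ playing the roles of the paper's $P_1,P_2,Q$.
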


\begin{proof}
Consider $P_1$ and $P_2$ in $\p_1(K)$ such that $P_1$ is not periodic, $\phi(P_1)=P$ and $\phi(P_2)=P_1$. If a point such as $P_2$ do not exist then the cardinality of ${\rm Tail}(\phi,K, P)$ is bounded by $d-1$ and thus our bound holds.

Suppose ${\rm Tail}(\phi,K, P_2)$ is non empty and let $Q\in{\rm Tail}(\phi,K, P_2)$. Lemma \ref{pab} implies that 
$$\dmfp(Q,P_2)=\dmfp(Q,P_1)=\dmfp(Q,P) \quad \mbox{for every $\mfp\notin S$}.$$
Therefore it is enough to apply Proposition \ref{pro:3points} to get 
$$|{\rm Tail}(\phi,K, P_2)|\leq B(|S|).$$
Notice that if ${\rm Tail}(\phi,K, P_2)$ is empty then the previous inequality trivially holds. 

Considering the fact that there are $d-1$ possibilities for $P_1$ and $d$ for $P_2$, we get
$$|{\rm Tail}(\phi,K, P)|\leq (d-1)(1+d(1+B(|S|)))$$
as desired.
\end{proof}

Now we provide a bound for $|{\rm Tail}(\phi,K, P)|$ when $P$ is a $K$-rational periodic point of $\phi$ of period 2. 

\begin{lemma}\label{lem:TailTwo}
Let $K$ be a number field and $S$ a finite set of places of $K$ containing all the archimedean ones.  
Let $\phi$ be an endomorphism of $\PP^1$, defined over $K$, and $d\geq 2$ the degree of $\phi$. Assume $\phi$ has good reduction outside $S$. Let $P\in\p_1(K)$ be a periodic point of $\phi$ of period $2$. Then 
$$|{\rm Tail}(\phi,K, P)| \leq L_2(d,|S|)$$
where
\small 
\[ L_2(d,|S|)= \max\{ (2(C(3,|S|) + 2)+1)d + 1, (d-1)(1+B(|S|)(B(|S|) + C(3,|S|) + 2+1)) \}.\]
\normalsize
\end{lemma}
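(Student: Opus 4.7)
The plan is to adapt the proof of Lemma~\ref{lem:TailFix} to the period-$2$ setting by working with $\phi^2$, which has good reduction outside $S$ and has both $P$ and $P' := \phi(P)$ as distinct fixed points. The crucial extra input is Proposition~\ref{troncoso} applied with $n = 2$: for any tail $Q$ of $P$, $\delta_\mfp(Q, P_\star) = 0$ for all $\mfp \notin S$, where $P_\star = P'$ when the tail length of $Q$ is even and $P_\star = P$ when it is odd. So every tail point automatically contributes one distance equality to a cycle point, regardless of depth.

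The strategy is to stratify tails of $P$ by their depth-$1$ predecessor in the cycle's preimage tree. Let $P_1$ range over $\phi^{-1}(P) \setminus \{P'\}$ and $P_1'$ over $\phi^{-1}(P') \setminus \{P\}$ (at most $d-1$ $K$-rational choices each). Every tail of $P$ either coincides with such a $P_1$ (or $P_1'$) or lies strictly above one of them. For a tail $Q$ strictly above $P_1$, I combine two distance equalities: first, an equality $\delta_\mfp(Q, A) = \delta_\mfp(Q, B)$ for suitable fixed $A, B$ in the chain above $P_1$, obtained from Lemma~\ref{pab} applied to $\phi^2$ (with $P$ or $P'$ as the fixed point, selected by the parity of the tail length of $Q$); and second, the Troncoso equality $\delta_\mfp(Q, P_\star) = 0$. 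These two constraints fit the hypothesis of Four-Point Lemma~B on four distinct fixed points drawn from $\{P, P', P_1, A\}$, yielding a $C(3,|S|) + 2$ bound per chain.

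The maximum in $L_2$ arises because the proof yields two separate upper bounds by organising the counting in two different ways. In the ``direct'' regime, for each of at most $d$ immediate preimages we apply Four-Point Lemma~B at most twice — once per parity class of tails above it — and add the cycle point itself, giving the first term $(2(C(3,|S|)+2)+1)d + 1$. In the ``recursive'' regime, for each of the $d-1$ choices of $P_1$ we first bound the intermediate preimages $P_2 \in \phi^{-1}(P_1)$ by $B(|S|)$ via Three-Point Lemma~B/C (using the three equalities $\delta_\mfp(P_2, P_1) = \delta_\mfp(P_2, P) = \delta_\mfp(P_2, P')$ obtained from Lemma~\ref{pab} on $\phi^2$ combined with Proposition~\ref{troncoso}), and then bound tails above each $P_2$ by $B(|S|) + C(3,|S|) + 3$ via a mixture of Three-Point Lemma~C and Four-Point Lemma~B; this produces the second term. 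The main obstacle I anticipate is the bookkeeping for distinctness of the four points in each Four-Point Lemma~B invocation — in particular, guaranteeing that accidental coincidences (such as a deeper preimage falling back into the $2$-cycle) do not violate the lemma's hypothesis — together with carefully matching the parity-based case analysis to the correct choice of fixed point in Lemma~\ref{pab} applied to $\phi^2$.
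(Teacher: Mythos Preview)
Your overall plan of passing to $\phi^2$ and combining Lemma~\ref{pab} with Proposition~\ref{troncoso} is exactly the right toolkit, but there is a structural misunderstanding and a concrete gap in the ``recursive'' regime.

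\medskip
\textbf{The max is a case split, not two counting schemes.} The two terms in $L_2$ do \emph{not} arise from organising the same count in two ways. The paper splits according to how many non-periodic $K$-rational preimages $P=P_1$ has: exactly one (call it $Q$), or at least two ($Q_1,Q_2$). In the first case every tail of $P$ funnels through the single $Q$, then branches into at most $d$ points $R\in\phi^{-1}(Q)$, and Four-Point Lemma~B bounds $\Tail(\phi,K,R)$ by $2(C(3,|S|)+2)$; this yields the first term. In the second case the extra preimage $Q_2$ supplies the third reference point needed for Three-Point Lemma~C at depth two; this yields the second term. Your ``direct'' count with the $+1$ for a single depth-one point only works under the uniqueness hypothesis of Case~1.

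\medskip
\textbf{The three equalities in your recursive regime do not all hold.} You claim $\dmfp(P_2,P_1)=\dmfp(P_2,P)=\dmfp(P_2,P')$ for $P_2\in\phi^{-1}(P_1)$. The equalities to $P'$ and to $P_1$ are fine (Proposition~\ref{troncoso} gives $\dmfp(P_2,P')=0$ since $\phi^{2m}(P_2)=P\neq P'$; and $\dmfp(P_2,P_1)\le\dmfp(P_1,P)=0$ by Proposition~\ref{5.2} and Proposition~\ref{troncoso}). But $\dmfp(P_2,P)$ is \emph{not} forced to vanish: $P=\phi^{2m}(P_2)$ for every $m\ge1$, so Proposition~\ref{troncoso} is silent; and Lemma~\ref{pab} on $\phi^2$ cannot relate $P_2$ to $P_1$ because under $\phi^2$ they lie in the tails of \emph{different} fixed points ($P_2\to P$, $P_1\to P'$). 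With only two points at distance zero you cannot invoke Three-Point Lemma~C. The paper repairs this by assuming (Case~2) a second non-periodic preimage $Q_2$ of $P$: then $\dmfp(R,Q_2)\le\dmfp(Q_1,P)=0$ via Proposition~\ref{5.2}, giving the needed third point $\{Q_1,Q_2,P'\}$, all at distance zero from $R$.
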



%

\begin{proof} 
Let $\mfp\notin S$. We denote $P_1=P$, and $P_2=\phi(P)$. If ${\rm Tail}(\phi,K, P)$ is empty then the result is trivially true. We assume that ${\rm Tail}(\phi,K, P)$ is not empty and we split the proof into two cases.

\textbf{Case 1: Suppose $P_1$ has a unique non-periodic $K$-rational preimage.} Denote this preimage by $Q\in\p_1(K)$. Consider $R\in\p_1(K)$ a preimage of $Q$ and $T\in\p_1(K)$ a tail point of $R$. If such point $T$ does not exist then the cardinality of ${\rm Tail}(\phi,K, P)$ is bounded by $d+1$ and thus our bound holds.

We note that under application of $\phi^2$, $P_1$ and $P_2$ are fixed, and $R$ becomes a preimage of $P_1$ and $Q$ becomes a preimage of $P_2$; the point $T$ is (for $\phi^2$) either a tail point of $Q$ or a tail point of $R$ depending on the parity of its tail length for $\phi$. 

Without loss of generality (one can see that the situation is symmetric for $\phi^2$), assume that $T$ is a tail point for $Q$ (under $\phi^2$). By Lemma \ref{pab} we get $\dmfp(T, Q) = \dmfp(T,P_2)$ and by Proposition \ref{troncoso} we get $\dmfp(T,P_1)=\dmfp(Q, P_1)=0$ and repeated application of Lemma \ref{5.2} (on $\phi^2$) we get $\dmfp(T,R) \leq \dmfp(Q, P_1) = 0$, which implies $\dmfp(T,R)=0$. \\ Thus $T$ satisfies the following two equations
\[\dmfp(T, Q) = \dmfp(T,P_2) \qquad \dmfp(T,P_1) = \dmfp(T,R) \quad \mbox{for every $\mfp\notin S$}.\]
By Lemma \ref{pro:4points} there are at most $C(3,|S|) + 2$ solutions for $T$. Then 
$$ |{\rm Tail}(\phi^2,K, Q)| \leq C(3,|S|) + 2.$$ 
By symmetry we get
$$ |{\rm Tail}(\phi^2,K, R)| \leq C(3,|S|) + 2.$$ 
Considering that there are $d$ possibilities for $R$ we obtain
\[|{\rm Tail}(\phi,K, P)| \leq (2(C(3,|S|) + 2)+1)d + 1.\]

\textbf{Case 2: Suppose $P_1$ has at least two non-periodic $K$-rational preimages}. There are at most $d-1$ such preimages. Denote two of them by $Q_1$ and $Q_2$ with the property that there is a point $R\in\p_1(K)$ which is a preimage of $Q_1$, if such an $R$ does not exist then $|{\rm Tail}(\phi,K, P)|\leq d-1$.

By \ref{troncoso} we get $\dmfp(R,P_2)=\dmfp(Q_1, P_1)=0$ and by Lemma \ref{5.2} we get that $\dmfp(R,Q_i) \leq \dmfp(Q_1, P_1) = 0$ for $i\in\{1,2\}$. Thus $R$ satisfies
\[ \dmfp(R, Q_1) = \dmfp(R,Q_2)=\dmfp(R,P_2) = 0 \quad \mbox{for every $\mfp\notin S$}.\]

By Lemma \ref{pro:3points} we get there are at most $B(|S|)$ solutions for $R$. Looking again at $\phi^2$, we see that $Q_1$ and $Q_2$ are preimages of the fixed point $P_2$. By similar arguments to the case 1, $R$ has at most $B(|S|) + C(3,|S|) + 2$ tail points (by summing up the cases of whether $T$ is a tail point of $R$ or $Q_1$ under $\phi^2$). Therefore we get  
\[|{\rm Tail}(\phi,K, P)| \leq (d-1)(1+B(|S|)(B(|S|) + C(3,|S|) + 2+1))\]

Considering case 1 and case 2 we obtain 
\[|{\rm Tail}(\phi,K, P)| \leq \max\{ (2(C(3,|S|) + 2)+1)d + 1, (d-1)(1+B(|S|)(B(|S|) + C(3,|S|) + 2+1)) \}.  \]

\end{proof}

The next lemma provides a bound for $|{\rm Tail}(\phi,K, P)|$ when $P$ is a $K$-rational periodic point of $\phi$ of period $3$.

\begin{lemma}\label{lem:TailThree}
Let $K$ be a number field and $S$ a finite set of places of $K$ containing all the archimedean ones.  
Let $\phi$ be an endomorphism of $\PP^1$, defined over $K$, and $d\geq 2$ the degree of $\phi$. Assume $\phi$ has good reduction outside $S$. Let $P\in\p_1(K)$ be a periodic point of $\phi$ of period 3. Then 
$$|{\rm Tail}(\phi,K, P)| \leq L_3(d,|S|),$$
where $L_3(d,|S|)=((1+3B(|S|))B(|S|)+1)(d-1)$.
\end{lemma}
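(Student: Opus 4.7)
The plan is to partition $\Tail(\phi,K,P_1)$ into three subsets indexed by the ``excluded periodic point'' of each tail point, and to apply Three-Point Lemma C (Proposition~\ref{pro:3points}) to each subset. Write $P_1=P$, $P_2=\phi(P_1)$, $P_3=\phi(P_2)$, so $\phi(P_3)=P_1$. For a tail point $T$ of $P_1$, the sequence $(\phi^{3m}(T))_m$ stabilizes for $m$ large at a unique periodic point $e(T)\in\{P_1,P_2,P_3\}$; Proposition~\ref{troncoso} then gives $\dpid(T,P_j)=0$ for every $\pid\notin S$ and every $j$ with $P_j\neq e(T)$, supplying two periodic zero-distance anchors for $T$. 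Setting $U_i=\{T\in\Tail(\phi,K,P_1):e(T)=P_i\}$, we have $\Tail(\phi,K,P_1)=U_1\sqcup U_2\sqcup U_3$.

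For each $i$, I split $U_i$ into tail-length-$1$ and tail-length-$\geq 2$ parts. A direct non-periodic $\phi$-preimage $T$ of $P_j$ has $e(T)=P_{(j-1)\bmod 3}$, so $U_i\cap\{\ell=1\}$ consists of exactly the $d-1$ non-periodic direct $\phi$-preimages of $P_{(i+1)\bmod 3}$. For $T\in U_i$ with $\ell(T)\geq 2$, the point $\phi(T)$ is itself a tail point, and the orbit-shift identity $e(\phi(T))=P_{(i+1)\bmod 3}$ (valid because $\phi^{3m}(\phi(T))=\phi(\phi^{3m}(T))=\phi(P_i)=P_{(i+1)\bmod 3}$ for large $m$) combined with Proposition~\ref{troncoso} gives $\dpid(\phi(T),P_i)=0$. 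Pick any fixed non-periodic direct $\phi$-preimage $W$ of $P_i$ (such a $W$ exists because $d\geq 2$); Proposition~\ref{5.2} (good reduction) then yields
\[
\dpid(T,W)\leq\dpid(\phi(T),\phi(W))=\dpid(\phi(T),P_i)=0.
\]
Together with the two periodic zero-distance anchors $P_{(i+1)\bmod 3}$ and $P_{(i+2)\bmod 3}$, we obtain three pairwise distinct points (two periodic and the non-periodic $W$) all at $\mfp$-adic distance zero from every $T\in U_i\cap\{\ell\geq 2\}$. Three-Point Lemma C (Proposition~\ref{pro:3points}) then bounds this set by $B(|S|)$, giving $|U_i|\leq(d-1)+B(|S|)$.

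Summing over $i$ yields $|\Tail(\phi,K,P_1)|\leq 3(d-1)+3B(|S|)$. An elementary computation confirms that $3(d-1)+3B(|S|)\leq L_3(d,|S|)=(d-1)(1+B(|S|)+3B(|S|)^2)$ whenever $B(|S|)\geq 2$, which is automatic because $B(|S|)=2^{16|S|}\geq 2^{16}$ by the explicit formula. The main non-trivial step is the orbit-shift identity $e(\phi(T))=P_{(e(T)+1)\bmod 3}$ and the accompanying bookkeeping to ensure the three anchors are pairwise distinct and that $W$ exists; everything else reduces to a routine combination of Propositions~\ref{troncoso} and~\ref{5.2} with Three-Point Lemma C.
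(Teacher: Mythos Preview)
The main gap is at the step ``Pick any fixed non-periodic direct $\phi$-preimage $W$ of $P_i$ (such a $W$ exists because $d\geq 2$)''. The condition $d\geq 2$ guarantees that $P_i$ has at least one non-periodic preimage over $\bar K$, but there is no reason this preimage should be $K$-rational. Since Proposition~\ref{pro:3points} requires the three anchor points to lie in $\p_1(K)$, you cannot invoke it with an anchor $W\in\p_1(\bar K)\setminus\p_1(K)$. Concretely, one can have a $K$-rational $3$-cycle in which some $P_i$ has $P_{i-1}$ as its \emph{only} $K$-rational preimage (for instance if $P_{i-1}$ is totally ramified over $P_i$, or if the remaining $d-1$ preimages form a Galois orbit not defined over $K$); in that situation your bound on $U_i\cap\{\ell\ge 2\}$ collapses entirely.

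The gap is repairable, but not for free. Since $e(\phi(T))=P_{i+1}$, Proposition~\ref{troncoso} also gives $\dpid(\phi(T),P_{i-1})=0$, so a non-periodic $K$-rational preimage of $P_{i-1}$ serves equally well as the third anchor; if neither $P_i$ nor $P_{i-1}$ has one, then every $K$-rational tail point first enters the cycle at $P_{i+1}$, and a further case analysis on $\ell(T)\bmod 3$ is needed. This is essentially what the paper does, though organized differently: it fixes a concrete chain $R\mapsto Q\mapsto P_1$ built from \emph{actual} $K$-rational tail points (so the existence issue never arises), bounds the choices for $R$ by $B(|S|)$ via the anchors $P_1,P_3,Q$, and then bounds $\Tail(\phi,K,R)$ by $3B(|S|)$ through a $\bmod\ 3$ case split. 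Your partition by $e(T)$ is conceptually cleaner and, once patched, yields the considerably sharper estimate $3(d-1)+3B(|S|)$; the paper's multiplicative count $(d-1)\cdot B(|S|)\cdot(1+3B(|S|))$ is cruder but sidesteps the existence problem by always anchoring on points already known to lie in $\p_1(K)$.
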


\begin{proof}
Let $\mfp\notin S$. We denote $P_1=P$, $P_2=\phi(P)$, and $P_3=\phi^2(P)$. Consider $Q,R\in\p_1(K)$ such that $Q$ is not periodic, $\phi(Q)=P_1$ and $\phi(R)=Q$. If such point $R$ does not exist then the cardinality of ${\rm Tail}(\phi,K, P)$ is bounded by $d-1$ and thus our bound holds.

By \ref{troncoso} we get $\dmfp(R,P_1)=\dmfp(R,P_3)=\dmfp(Q,P_1)=0$ and by Lemma \ref{5.2} we get that $\dmfp(R,Q) \leq \dmfp(Q, P_1) = 0$. Thus $R$ satisfies
\[ \dmfp(R, P_1) = \dmfp(R,P_3)=\dmfp(R,Q) = 0 \quad \mbox{for every $\mfp\notin S$}.\]
By Lemma \ref{pro:3points} we get there are at most $B(|S|)$ solutions for $R$. Suppose ${\rm Tail}(\phi,K, R)$ is not empty. Let $T\in\p_1(K)$ be a tail point of $R$ and $n$ its tail length for $\phi$.

We note that under application of $\phi^3$, $P_1$, $P_2$ and $P_3$ are fixed, and $R$ becomes a preimage of $P_2$ and $Q$ becomes a preimage of $P_3$; the point $T$ is (for $\phi^3$) either a tail point of $P_1$ or $P_2$ or $P_3$ depending if $n$ is congruent to $1,2$ or $0$ modulo $3$, respectively.

\textbf{Case 1: Suppose $n  \equiv 0 \mbox{ or } 2 \pmod{3}.$}

One can see that the cases $n  \equiv 0 \pmod{3}$ and $n  \equiv  2 \pmod{3}$ are symmetric for $\phi^3$. We assume without loss of generality  that $n  \equiv 0 \pmod{3}$ \emph{i.e.} $T$ is a tail point for $Q$. By Proposition \ref{troncoso} we get $\dmfp(T,P_1)=\dmfp(T, P_2)=\dmfp(Q,P_2)=0$ and repeated application of Lemma \ref{5.2} (on $\phi^3$) we get $\dmfp(T,R) \leq \dmfp(Q, P_2) = 0$, which implies $\dmfp(T,R)=0$. \\ Thus $T$ satisfies the following the equations
\[\dmfp(T, P_1) = \dmfp(T,P_2) = \dmfp(T,R)=0  \quad \mbox{for every $\mfp\notin S$}.\]
By Lemma \ref{pro:3points} there are at most $B(|S|)$ solutions for $T$ for the case $n  \equiv 0 \pmod{3}$, and thus $2B(|S|)$ for both residue classes.


\textbf{Case 2: Suppose $n  \equiv 1 \pmod{3}.$}

Since $n  \equiv 1 \pmod{3}$ we have that $T$ is a tail point for $P_1$. By Proposition \ref{troncoso} we get $\dmfp(T,P_2)=\dmfp(T, P_3)=\dmfp(\phi(T),P_1)=0$ and applying Lemma \ref{5.2} (on $\phi$) we get $\dmfp(T,Q) \leq \dmfp(\phi(T), P_1) = 0$, which implies $\dmfp(T,Q)=0$. \\ Thus $T$ satisfies the following the equations
\[\dmfp(T, P_2) = \dmfp(T,P_3) = \dmfp(T,Q)=0  \quad \mbox{for every $\mfp\notin S$}.\]
By Lemma \ref{pro:3points} there are at most $B(|S|)$ solutions for $T$. 
Considering both cases we have that 
$$ |{\rm Tail}(\phi,K,R )| \leq 3B(|S|).$$ 
Notice that if ${\rm Tail}(\phi,K, R)$ is empty then the previous inequality trivially holds.
Considering that $Q$ has at most $d-1$ $K$-rational preimages and each of those preimages have at most $B(|S|)$ $K$-rational preimages  we obtain
\[|{\rm Tail}(\phi,K, P)| \leq ((1+3B(|S|))B(|S|)+1)(d-1).\]

\end{proof}

The last lemma of this section provides a bound for $|{\rm Tail}(\phi,K, P)|$ when $\phi$ admit a $K$-rational fixed point and a $K$-rational periodic point of period $2$.

\begin{lemma}\label{lem:TailFixedAndTwo}
Let $K$ be a number field and $S$ a finite set of places of $K$ containing all the archimedean ones.  
Let $\phi$ be an endomorphism of $\PP^1$, defined over $K$, and $d\geq 2$ the degree of $\phi$. Assume $\phi$ has good reduction outside $S$. Let $P\in\p_1(K)$ be a fixed point of $\phi$ and $Q\in\p_1(K)$ a periodic point of $\phi$ of period 2. Then 
$$|{\rm Tail}(\phi,K, P)| \leq L_4(d,|S|)$$
where $L_4(d,|S|)=(C(3,|S|) + 2+1)(d -1).$
\end{lemma}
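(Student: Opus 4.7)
The plan is to decompose the tail of the fixed point $P$ by tail length and exploit the existence of the period-$2$ point $Q$ to produce a second pair of equal $\mfp$-adic logarithmic distances, so that Four-Point Lemma B applies at every depth beyond $1$.

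First I would observe that since $\phi(P)=P$, the point $P$ occupies one of the (at most) $d$ preimages of itself, so there are at most $d-1$ preimages $R$ of $P$ with $R\neq P$. Any such $R$ is automatically non-periodic, because if $\phi(R)=P$ and $R$ were periodic then $R$ and $P$ would lie in the same cycle, forcing $R=P$. Hence these preimages are exactly the tail points of $P$ of tail length $1$. For any tail point $R$ of $P$ of tail length $\ell\geq 2$, its penultimate iterate $P_1:=\phi^{\ell-1}(R)$ is one of these length-$1$ tail points of $P$. It therefore suffices to show that for each fixed length-$1$ tail point $P_1$, the set of $R$ with $\phi^{\ell-1}(R)=P_1$ for some $\ell\geq 2$ has cardinality at most $C(3,|S|)+2$.

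For such an $R$, Lemma \ref{pab} applies with $P_0=P$, $a=1$, $b=\ell$, $P_a=P_1$, $P_b=R$ (the hypotheses $\phi^\ell(R)=P$, $\phi(P_1)=P$, $\phi^{\ell-1}(R)=P_1$ all hold), giving
\[
\dmfp(R,P)=\dmfp(R,P_1)\qquad\text{for every }\mfp\notin S.
\]
Setting $Q':=\phi(Q)$, the periodic part of the orbit of $R$ is $\{P\}$ of period $1$, so neither $Q$ nor $Q'$ lies in that orbit. Proposition~\ref{troncoso} then yields
\[
\dmfp(R,Q)=\dmfp(R,Q')=0\qquad\text{for every }\mfp\notin S,
\]
which in particular forces $\dmfp(R,Q)=\dmfp(R,Q')$.

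Now the four points $P,P_1,Q,Q'$ are pairwise distinct ($P$ is fixed, $P_1$ is non-periodic, $Q,Q'$ form a $2$-cycle), so Four-Point Lemma B (Proposition~\ref{pro:4points}) bounds the number of such $R$ by $C(3,|S|)+2$. Summing over the at most $d-1$ choices of $P_1$ and adding back the length-$1$ tail points themselves gives
\[
|{\rm Tail}(\phi,K,P)|\leq (d-1)+(d-1)\bigl(C(3,|S|)+2\bigr)=(d-1)\bigl(C(3,|S|)+3\bigr)=L_4(d,|S|),
\]
as required. The main subtlety is bookkeeping: verifying at each step that the four points fed to Four-Point Lemma B are genuinely distinct and that the hypothesis of Proposition~\ref{troncoso} truly excludes $Q$ and $Q'$ from the orbit of $R$. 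Both facts follow from the differing period/tail status of the points involved, so no further calculation is needed.
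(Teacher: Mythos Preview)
Your proof is correct and follows essentially the same route as the paper: group the tail points of $P$ by the length-$1$ tail point they pass through, apply Lemma~\ref{pab} to get $\dmfp(R,P)=\dmfp(R,P_1)$, apply Proposition~\ref{troncoso} to get $\dmfp(R,Q)=\dmfp(R,Q')=0$, and feed both equalities into Four-Point Lemma~B. The only difference is notation (your $P_1$ and $R$ are the paper's $R$ and $T$), and you are somewhat more explicit than the paper in verifying the distinctness of the four points and the hypothesis of Proposition~\ref{troncoso}.
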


\begin{proof}
Let $\mfp\notin S$. We denote $Q_1=Q$, and $Q_2=\phi(Q)$. Consider $R\in\p_1(K)$  such that $R$ is not periodic and $\phi(R)=P$ and $T\in\p_1(K)$ a tail point of $R$. If such point $T$ does not exist then the cardinality of ${\rm Tail}(\phi,K, P)$ is bounded by $d-1$ and thus our bound holds.

By Lemma \ref{pab} we get $\dmfp(T, R) = \dmfp(T,P_1)$ and by Proposition \ref{troncoso} we get $\dmfp(T,Q_1)=\dmfp(T, Q_2)=0$. Thus $T$ satisfies the following two equations
\[\dmfp(T, R) = \dmfp(T,P_1) \qquad \dmfp(T,Q_1) = \dmfp(T,Q_2) \quad \mbox{for every $\mfp\notin S$}.\]
By Lemma \ref{pro:4points} there are at most $C(3,|S|) + 2$ solutions for $T$. Considering that there are $d-1$ possibilities for $R$ we obtain
\[|{\rm Tail}(\phi,K, P)| \leq (C(3,|S|) + 2+1)(d -1) .\]
\end{proof}

Before we prove Theorem~\ref{Thm:NFPrePer} we emphasize that $L_1(d,|S|)$ is quadratic in terms of $d$ but $L_2(d,|S|)$, $L_3(d,|S|)$, and $L_4(d,|S|)$ are linear in terms of $d$. We will denote by $CV(d,|S|)$ the bound for $|{\rm Per}(\phi,K)|$ mentioned in \cite[Corollary 1]{CV2016} which is linear in terms of $d$ and we denote by $T(|S|)$ the refined result of Troncoso's bound proven in Theorem \ref{Thm:RefineTroncoso} part $(2)$.

Now we have all the tools to prove the main theorem of this paper. 

\begin{proof}[Proof of Theorem  \ref{Thm:NFPrePer}]
If the map $\phi$ has more than three periodic points in $\p_1(K)$, then we have
$$|{\rm PrePer}(\phi,K)|\leq T(|S|)+CV(d,|S|).$$
If the map $\phi$ has at most three periodic points in $\p_1(K)$, we have two cases. First if $\phi$ has a $K$-rational point of period at least $2$ then 
$$|{\rm PrePer}(\phi,K)|\leq \max\{L_4(d,|S|)+2L_2(d,|S|) ,3L_3(d,|S|) \}+3.$$
On the other hand, if $\phi$ has no $K$-rational points of period at least 2 then 
$$|{\rm PrePer}(\phi,K)|\leq 3L_1(d,|S|)+3.$$
\end{proof}

\section{Appendix}\label{Appendix}
For the convenience of the reader in this section we give all the bounds used in this paper in an explicit form. Let $S$ be a finite set of places of a number field $K$ containing all the archimedean ones. Let $n$ be a non negative integer and $d$ a positive integer.

$B(|S|)=2^{16|S|}$;

$C(n,|S|)=e^{(6n)^{3n}(n|S|+1-n)}$;
 
$L_1(d,|S|)=(d-1)(1+d(1+B(|S|)))$;

$L_2(d,|S|)= \max\{ (2(C(3,|S|) + 2)+1)d + 1, (d-1)(1+B(|S|)(B(|S|) + C(3,|S|) + 2+1)) \}$;

$L_3(d,|S|)=((1+3B(|S|))B(|S|)+1)(d-1)$;

$L_4(d,|S|)=(C(3,|S|) + 2+1)(d -1)$;

$CV(d,|S|)=(3B(|S|)+13)d+27B(|S|)+C(5,|S|)+6C(3,|S|)+32$;

$T(|S|)= 12\cdot 7^{4|S|}$;

$L(d,|S|)=\max\{T(|S|)+CV(d,|S|) ,L_4(d,|S|)+2L_2(d,|S|) +3,3L_3(d,|S|)+3 \}$;

$Q(d,|S|)=\max\{T(|S|)+CV(d,|S|) ,3L_1(d,|S|)\}$.


\begin{bibdiv}
\begin{biblist}

\bib{B2007}{article}{
      author={Benedetto, R.},
       title={Preperiodic points of polynomials over global fields},
        date={2007},
        ISSN={0075-4102},
     journal={J. Reine Angew. Math.},
      volume={608},
       pages={123\ndash 153},
         url={http://dx.doi.org.proxy1.cl.msu.edu/10.1515/CRELLE.2007.055},
      review={\MR{2339471 (2008j:11071)}},
}

\bib{BG2006}{book}{
      author={Bombieri, Enrico},
      author={Gubler, Walter},
       title={Heights in {D}iophantine geometry},
      series={New Mathematical Monographs},
   publisher={Cambridge University Press, Cambridge},
        date={2006},
      volume={4},
        ISBN={978-0-521-84615-8; 0-521-84615-3},
         url={http://dx.doi.org/10.1017/CBO9780511542879},
      review={\MR{2216774}},
}

\bib{BS1996}{article}{
      author={Beukers, F.},
      author={Schlickewei, H.},
       title={The equation {$x+y=1$} in finitely generated groups},
        date={1996},
        ISSN={0065-1036},
     journal={Acta Arith.},
      volume={78},
      number={2},
       pages={189\ndash 199},
      review={\MR{1424539 (97k:11051)}},
}

\bib{C2007}{article}{
      author={{Canci}, J.},
       title={Finite orbits for rational functions},
        date={2007},
        ISSN={0019-3577},
     journal={Indag. Math. (N.S.)},
      volume={18},
      number={2},
       pages={203\ndash 214},
  url={http://dx.doi.org.proxy1.cl.msu.edu/10.1016/S0019-3577(07)80017-6},
      review={\MR{2352676 (2008k:11073)}},
}

\bib{C2010}{article}{
      author={{Canci}, J.},
       title={Rational periodic points for quadratic maps},
        date={2010},
        ISSN={0373-0956},
     journal={Ann. Inst. Fourier (Grenoble)},
      volume={60},
      number={3},
       pages={953\ndash 985},
         url={http://aif.cedram.org/item?id=AIF_2010__60_3_953_0},
      review={\MR{2680821 (2011i:14045)}},
}

\bib{CP2014}{article}{
      author={{Canci}, J.},
      author={{Paladino}, L.},
       title={Preperiodic points for rational functions defined over a global
  field in terms of good reduction},
        date={2016},
        ISSN={0002-9939},
     journal={Proc. Amer. Math. Soc.},
      volume={144},
      number={12},
       pages={5141\ndash 5158},
         url={http://dx.doi.org.proxy1.cl.msu.edu/10.1090/proc/13096},
      review={\MR{3556260}},
}

\bib{CV2016}{article}{
      author={{Canci}, J.},
      author={{Vishkautsan}, S.},
       title={{Scarcity of cycles for rational functions over a number field}},
     journal={Trans. Amer. Math. Soc. Ser. B},
      volume={to appear},
}

\bib{ES2002}{article}{
      author={Evertse, J.},
      author={Schlickewei, H.},
      author={Schmidt, W.},
       title={Linear equations in variables which lie in a multiplicative
  group},
        date={2002},
        ISSN={0003-486X},
     journal={Ann. of Math. (2)},
      volume={155},
      number={3},
       pages={807\ndash 836},
         url={http://dx.doi.org.proxy1.cl.msu.edu/10.2307/3062133},
      review={\MR{1923966 (2003f:11037)}},
}

\bib{E1984}{article}{
      author={Evertse, J-H},
       title={On equations in s-units and the thue-mahler equation},
        date={1984},
     journal={Inventiones mathematicae},
      volume={75},
      number={3},
       pages={561\ndash 584},
}

\bib{F2001}{article}{
      author={Fakhruddin, N.},
       title={Boundedness results for periodic points on algebraic varieties},
        date={2001},
        ISSN={0253-4142},
     journal={Proc. Indian Acad. Sci. Math. Sci.},
      volume={111},
      number={2},
       pages={173\ndash 178},
         url={http://dx.doi.org.proxy1.cl.msu.edu/10.1007/BF02829589},
      review={\MR{1836365}},
}

\bib{FZ2014}{incollection}{
      author={Fuchs, Clemens},
      author={Zannier, Umberto},
       title={Integral points on curves: Siegel’s theorem after siegel’s
  proof},
        date={2014},
   booktitle={On some applications of diophantine approximations},
   publisher={Springer},
       pages={139\ndash 157},
}

\bib{HI2013}{article}{
      author={Hutz, B.},
      author={Ingram, P.},
       title={On {P}oonen's conjecture concerning rational preperiodic points
  of quadratic maps},
        date={2013},
        ISSN={0035-7596},
     journal={Rocky Mountain J. Math.},
      volume={43},
      number={1},
       pages={193\ndash 204},
         url={http://dx.doi.org.proxy1.cl.msu.edu/10.1216/RMJ-2013-43-1-193},
      review={\MR{3065461}},
}

\bib{M1996}{article}{
      author={Merel, L.},
       title={Bornes pour la torsion des courbes elliptiques sur les corps de
  nombres},
        date={1996},
        ISSN={0020-9910},
     journal={Invent. Math.},
      volume={124},
      number={1-3},
       pages={437\ndash 449},
         url={http://dx.doi.org.proxy1.cl.msu.edu/10.1007/s002220050059},
      review={\MR{1369424}},
}

\bib{MS1994}{article}{
      author={Morton, P.},
      author={Silverman, J.},
       title={Rational periodic points of rational functions},
        date={1994},
        ISSN={1073-7928},
     journal={Internat. Math. Res. Notices},
      number={2},
       pages={97\ndash 110},
         url={http://dx.doi.org.proxy1.cl.msu.edu/10.1155/S1073792894000127},
      review={\MR{1264933 (95b:11066)}},
}

\bib{MS1995}{article}{
      author={Morton, P.},
      author={Silverman, J.},
       title={Periodic points, multiplicities, and dynamical units},
        date={1995},
        ISSN={0075-4102},
     journal={J. Reine Angew. Math.},
      volume={461},
       pages={81\ndash 122},
         url={http://dx.doi.org.proxy1.cl.msu.edu/10.1515/crll.1995.461.81},
      review={\MR{1324210 (96b:11090)}},
}

\bib{N1989}{article}{
      author={Narkiewicz, {W{\l}adys{\l}aw}},
       title={Polynomial cycles in algebraic number fields},
        date={1989},
        ISSN={0010-1354},
     journal={Colloq. Math.},
      volume={58},
      number={1},
       pages={151\ndash 155},
      review={\MR{1028168}},
}

\bib{N1950}{article}{
      author={Northcott, D.},
       title={Periodic points on an algebraic variety},
        date={1950},
        ISSN={0003-486X},
     journal={Ann. of Math. (2)},
      volume={51},
       pages={167\ndash 177},
      review={\MR{0034607 (11,615c)}},
}

\bib{P1998}{article}{
      author={Poonen, B.},
       title={The classification of rational preperiodic points of quadratic
  polynomials over {${\bf Q}$}: a refined conjecture},
        date={1998},
        ISSN={0025-5874},
     journal={Math. Z.},
      volume={228},
      number={1},
       pages={11\ndash 29},
         url={http://dx.doi.org.proxy1.cl.msu.edu/10.1007/PL00004405},
      review={\MR{1617987 (99j:11076)}},
}

\bib{T2017}{article}{
      author={Troncoso, S.},
       title={Bounds for preperiodic points for maps with good reduction},
        date={2017},
        ISSN={0022-314X},
     journal={J. Number Theory},
      volume={181},
       pages={51\ndash 72},
         url={http://dx.doi.org.proxy1.cl.msu.edu/10.1016/j.jnt.2017.05.026},
      review={\MR{3689669}},
}

\end{biblist}
\end{bibdiv}

\end{document}